\newcommand{\abs}[1]{\left\lvert#1\right\rvert}
\newcommand{\CCC}{\mathcal{C}}
\newcommand{\BBB}{\mathcal{B}}
\newcommand{\LLL}{\mathcal{L}}
\newcommand{\DDD}{\mathcal{D}}
\newcommand{\GGG}{\mathcal{G}}
\newcommand{\MMM}{\mathcal{M}}
\newcommand{\FFF}{\mathcal{F}}
\newcommand{\EEE}{\mathcal{E}}
\newcommand{\RR}{\mathbb{R}}
\newcommand{\NN}{\mathbb{N}}
\newcommand{\ZZ}{\mathbb{Z}}
\newcommand{\zz}{\mathbf{z}}
\newcommand{\hd}{\hat{d}}
\newcommand{\ph}{\varphi}
\newcommand{\Ms}{\MMM_\sigma}
\newcommand{\Mse}{\MMM_\sigma^e}
\newcommand{\ww}{\mathbf{w}}
\newcommand{\smatrix}[1]{\left(\begin{smallmatrix} #1 \end{smallmatrix}\right)}
\newcommand{\ulim}{\varlimsup}
\newcommand{\Phieg}{\Phi}
\theoremstyle{plain}
\newtheorem{thma}{Theorem}
\newtheorem{theorem}{Theorem}[section]
\newtheorem{proposition}[theorem]{Proposition}
\newtheorem{lemma}[theorem]{Lemma}
\DeclareMathOperator{\Ham}{Ham}
\theoremstyle{definition}
\newtheorem{definition}[theorem]{Definition}
\newtheorem{remark}[theorem]{Remark}
\numberwithin{equation}{section}
\begin{document}

\title[Large deviations and non-uniform structures]{Large deviations for systems with non-uniform structure}

\author[V. Climenhaga]{Vaughn Climenhaga}
\address{Department of Mathematics \\ University of Houston \\
Houston, Texas 77204, USA}
\email{climenha@math.uh.edu}

\author[D. J. Thompson]{Daniel J. Thompson}
\address{Department of Mathematics \\ The Ohio State University \\
100 Math Tower, 231 West 18th Avenue, Columbus, Ohio 43210,  USA}
\email{thompson@math.osu.edu}

\author[K. Yamamoto]{Kenichiro Yamamoto}
\address{Department of General Education, Nagaoka University of Technology,
Niigata 940-2188, Japan
}
\email{
k\_yamamoto@vos.nagaokaut.ac.jp}
\thanks{V.C.\ was partially supported by NSF grant DMS-1362838.  D.T.\ was partially supported by NSF grants DMS-$1101576$ and DMS-$1259311$}
\subjclass[2000]{37A50, 60F10, 37D35, 37D25, 37B10}

\begin{abstract}
We use a weak Gibbs property and a weak form of specification to derive level-2 large deviations principles for symbolic systems equipped with a large class of reference measures.  This has applications to a broad class of symbolic systems, including $\beta$-shifts, $S$-gap shifts, and their factors.  A crucial step in our approach is to prove a `horseshoe theorem' for these systems. 
\end{abstract}

\date{\today}

\maketitle

\section{Introduction}

We introduce criteria for a symbolic system to satisfy the large deviations principle. These criteria are motivated by the `non-uniform' structure of our main examples -- $\beta$-shifts, $S$-gap shifts, and their factors -- but apply more generally. We prove the following main result.  (See \S\ref{sec:defs} for precise definitions.)

\begin{thma}\label{thm:main}
Let $(X,\sigma)$ be a shift on a finite alphabet, $m$ a Borel probability measure on $X$, and $\ph\colon X\to\RR$ a continuous function.  Let $\LLL$ be the language of $X$.  Suppose there exists a set $\GGG\subset\LLL$ such that
\begin{enumerate}[label=\textbf{[A.\arabic{*}]}]
\item\label{A.spec}there exists $\tau >0$ such that for every $v,w\in \GGG$ there exists $u\in \LLL$ with $|u|\leq \tau$ such that $vuw\in \GGG$; 
\item\label{A.ASD} $\LLL$ is edit approachable by $\GGG$;
\item\label{A.Gibbs} $m$ is Gibbs for $\ph$ with respect to the collection $\GGG$.
\end{enumerate}
Then $(X,\sigma)$ satisfies a level-2 large deviations principle with reference measure $m$ and rate function $q^\ph\colon\MMM(X)\to[-\infty,0]$ given by
\begin{equation}\label{eqn:rate}
q^\ph(\mu) = \begin{cases}
h(\mu) + \int\ph\,d\mu - P(\ph) &\mu\in\Ms(X),\\
-\infty&\text{otherwise}.
\end{cases}
\end{equation}
\end{thma}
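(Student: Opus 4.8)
The plan is to establish the upper and lower large deviations bounds separately, following the now-standard route through the pressure function but adapted to the non-uniform setting. The key technical device will be the weak Gibbs property from \ref{A.Gibbs}: it gives us, for each $w$ in the good collection $\GGG$, two-sided control $m([w]) \asymp e^{-|w|P(\ph) + S_{|w|}\ph(x)}$ (up to subexponential error) for $x\in[w]$, while for words outside $\GGG$ we only need a one-sided bound coming from the fact that $m$ is a probability measure and the total mass of cylinders is controlled. First I would reduce the level-2 principle to estimates on $m\{x : \frac{1}{n}S_n\delta_x \in \mathcal{F}\}$ for closed $\mathcal{F}$ and open $\mathcal{G}$ in $\MMM(X)$, where $S_n\delta_x = \sum_{k=0}^{n-1}\delta_{\sigma^k x}$ is the empirical measure; by the standard abstract machinery (e.g. the approach via the pressure function $\mathcal{P}$ on $C(X)$ and its Legendre transform, or a direct Ruelle-type argument) it suffices to show that for every continuous $\psi$,
\[
\lim_{n\to\infty} \frac1n \log \int e^{S_n\psi}\,dm = P(\ph+\psi) - P(\ph),
\]
together with an upper bound estimate that localizes empirical measures near individual invariant measures, and the identification $\sup_{\nu}\big(\int\psi\,d\nu + q^\ph(\nu)\big) = P(\ph+\psi)-P(\ph)$, which is just the variational principle.

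The \emph{upper bound} is the easier half. Partitioning $X$ into cylinders $[w]$ with $|w|=n$, the weak Gibbs bound on $w\in\GGG$ and the trivial bound $m([w])\le 1$ on the bad words, combined with the fact (from edit approachability \ref{A.ASD}) that the bad words carry negligible exponential weight in any pressure computation, yields $\int e^{S_n\psi}\,dm \le e^{n(P(\ph+\psi)-P(\ph)) + o(n)}$ after comparing $\sum_{w} e^{\sup_{[w]} S_n\psi}$ to a pressure sum for $\ph+\psi$. To go from this to the closed-set upper bound $\limsup \frac1n\log m\{\frac1n S_n\delta_x\in\mathcal{F}\} \le \sup_{\mu\in\mathcal{F}} q^\ph(\mu)$ one runs the usual argument: cover $\mathcal{F}$ by finitely many weak-$*$ neighborhoods, on each of which some $\psi$ forces $\int\psi\,d(\frac1n S_n\delta_x)$ to be large, and optimize; upper semicontinuity of $q^\ph$ (which follows from upper semicontinuity of entropy on the finite-alphabet shift) then gives the clean form. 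The \emph{lower bound} is where the non-uniform structure does real work and is the main obstacle I anticipate. For an open set $\mathcal{G}$ containing an ergodic $\mu\in\Ms(X)$, I would fix a small weak-$*$ neighborhood $U\ni\mu$ with $U\subset\mathcal{G}$ and, using \ref{A.spec} (the (W)-specification with good concatenations on $\GGG$) together with edit approachability, construct for large $n$ a large collection of words $w\in\GGG_n$ (or close to $\GGG_n$ in edit distance) such that every $x\in[w]$ has $\frac1n S_n\delta_x\in U$ — this is the orbit-gluing step, and it requires knowing that a positive-entropy amount of $\mu$-generic behavior can be realized inside $\GGG$, which is precisely the content one extracts from edit approachability plus the Katok-type entropy estimate for $\mu$. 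The number of such words is $\ge e^{n(h(\mu)-\varepsilon)}$, and on each the weak Gibbs property gives $m([w])\ge e^{n(\int\ph\,d\mu - P(\ph) - \varepsilon)+o(n)}$ uniformly; summing over this collection yields $m\{\frac1n S_n\delta_x\in U\} \ge e^{n(h(\mu)+\int\ph\,d\mu-P(\ph)-2\varepsilon)+o(n)} = e^{n(q^\ph(\mu)-2\varepsilon)+o(n)}$, and letting $\varepsilon\to0$ and passing to general (non-ergodic) $\mu$ by the affinity/approximation of $q^\ph$ and the density of ergodic measures gives $\liminf \frac1n\log m(\mathcal{G}) \ge \sup_{\mu\in\mathcal{G}} q^\ph(\mu)$.

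The genuinely delicate points, and where I expect most of the work to lie, are: (i) making the orbit-gluing quantitative so that the glued words genuinely lie in $\GGG$ (or within bounded edit distance, so that the Gibbs bound survives with only subexponential loss) while still controlling empirical measures — the "good concatenations" hypothesis in \ref{A.spec} is designed exactly so that concatenations of $\GGG$-words stay in $\GGG$, so the interplay between the gap lengths from (W)-specification, the edit distance from \ref{A.ASD}, and the Birkhoff averages must be bookkept carefully; (ii) handling words \emph{not} in $\GGG$ in the upper bound — here edit approachability must be invoked to say that every word is close to a $\GGG$-word, so its $m$-measure is bounded by that of a nearby $\GGG$-word times a subexponential factor, and one must check the edit operations perturb $S_n\psi$ and $S_n\ph$ only by $o(n)$; and (iii) the passage to non-ergodic measures in the lower bound and the verification of upper semicontinuity of $q^\ph$, both of which are standard on subshifts but should be stated cleanly. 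I would organize the paper so that these three technical lemmas — a weak-Gibbs-from-$\GGG$ consequence, an edit-perturbation estimate, and a counting/gluing lemma — are proved first, after which the two large deviations bounds follow by assembling them in the classical manner.
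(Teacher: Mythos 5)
Your overall division into upper and lower bounds is right, and your sketch of the lower bound captures the correct philosophy (orbit-gluing on $\GGG$, edit approachability to control Birkhoff sums, Katok-type entropy estimates), but there are two substantive errors and one missing idea worth flagging.

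First, you have the Gibbs hypothesis slightly wrong, which leads you to overcomplicate the upper bound. Condition \ref{A.Gibbs} gives the \emph{upper} Gibbs inequality $m[w]\le K'e^{-nP(\ph)+S_n\ph(x)}$ for \emph{every} cylinder $w\in\LLL_n$; only the \emph{lower} inequality is restricted to $\GGG$. So edit approachability plays no role in the upper bound: the paper simply verifies the hypothesis of a theorem of Pfister--Sullivan (\cite[Theorem 3.2 and Proposition 4.2]{PfS}) using the upper Gibbs bound alone. Your plan of comparing bad cylinders to nearby $\GGG$-cylinders via edits and bounding the resulting subexponential loss is unnecessary work, and worse, the moment-generating-function reduction you open with (proving $\frac1n\log\int e^{S_n\psi}\,dm\to P(\ph+\psi)-P(\ph)$ and appealing to G\"artner--Ellis machinery) is precisely the ``functional approach'' the paper avoids: without differentiability of the log-MGF, i.e.\ without uniqueness of equilibrium states for all $\ph+\psi$ in a dense set of $\psi$, it gives only the upper bound, and those uniqueness hypotheses are not assumed here. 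Fortunately you abandon the MGF route when you get to the lower bound, but as written the reduction at the top of your proposal is not valid.

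Second, and more seriously, your passage from ergodic $\mu$ to general invariant $\mu$ in the lower bound invokes ``density of ergodic measures,'' which fails in exactly the non-specification setting this theorem targets (ergodic measures are dense when the system has specification, by Sigmund's theorem, but $S$-gap shifts without almost specification are among the motivating examples). This is the gap that the paper's Proposition~\ref{prop:ent-appr} is designed to close: one explicitly constructs an increasing family of subshifts $X_n\subset X$ (languages built by gluing $\GGG$-words of length $\le n$) each of which has (W)-specification and satisfies the two-sided Gibbs bound \eqref{eqn:XnGibbs}, and then shows that any $\mu\in\Ms(X)$ is weak$^*$- and entropy-approximated by \emph{ergodic} measures on some $X_n$. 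The proof of that approximation does use the ergodic decomposition and affinity of entropy, but then builds an explicit subshift $H\subset X_n$ by interleaving long $\mu_i$-generic words, edit-approaching each into $\GGG$, and gluing; the entropy of $H$ is controlled from below using the counting Lemmas~\ref{lem:edit-balls} and~\ref{lem:preimages-general}, which bound the multiplicity of the edit-and-glue map. Your proposal hand-waves exactly this construction as ``a positive-entropy amount of $\mu$-generic behavior can be realized inside $\GGG$''; making that quantitative, with careful bookkeeping of the gluing-word lengths (which are not fixed under (W)-specification) and the edit distances, is where the real work is and where your outline is thinnest. Once Proposition~\ref{prop:ent-appr} is in place, the lower bound closes exactly as you suggest: pick an ergodic $\nu\in U\cap\Mse(X_k)$ with $h(\nu)>h(\mu)-\eta$, count $\nu$-good words in $\LLL(X_k)$ via Pfister--Sullivan, and apply the lower Gibbs bound on $X_k$.
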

Condition \ref{A.spec} is a form of the specification property for $\GGG$.  Condition \ref{A.ASD} means that any word $w\in\LLL$ can be transformed into a word in $\GGG$ without making too many edits.  The condition \ref{A.Gibbs} means that $m$ satisfies an upper Gibbs bound on all cylinders, and a lower Gibbs bound on cylinders corresponding to words in $\GGG$.  Level-2 large deviations gives an exponential decay rate for the measure of the set of points whose empirical averages are experiencing a `large deviation' from their expected value (see \S \ref{intro:ldp}).  

The criteria we introduce can be verified for many shift spaces including $S$-gap shifts, $\beta$-shifts, and their factors, using a large class of equilibrium states as reference measures. For $\beta$-shifts, large deviations was previously known only in the case that the reference measure is the measure of maximal  entropy (MME) \cite{PfS}.  For $S$-gap shifts, and subshift factors of $\beta$-shifts and $S$-gap shifts, these are the first results on large deviations. 

The recent work of  \cite{CT, CT2} has provided the necessary groundwork to make our criteria verifiable for a large class of symbolic systems, by giving conditions for a potential $\ph$ to have a unique equilibrium state with the weak Gibbs property \ref{A.Gibbs} -- we state these in Theorem \ref{thm:CT}. In particular, for $\beta$-shifts, we proved in \cite{CT2} that we have a unique equilibrium state with the weak Gibbs property for every H\"older continuous potential. In this paper, we show the same result for $S$-gap shifts.

Our approach belongs to the `orbit-gluing approach' to large deviations, which relies on direct constructions based on the specification property (or one of its variants).  Classic and recent references include \cite{FO, EKW, Y, PfS, Ya}. The basic strategy is
\begin{itemize}
\item to obtain a (weak) Gibbs bound for the reference measure using constructive techniques;
\item to establish the entropy density of ergodic measures.
\end{itemize}
Our approach is related to powerful general techniques of Pfister and Sullivan \cite{PfS}, who  introduced two hypotheses from which large deviations follow: the approximate product property for the system, and the existence of (upper and lower) weak energy functions for the reference measure. Their results show that a uniform upper Gibbs property yields the upper large deviations bound, and we apply that result in this paper  (see \S \ref{sec:ub}). However, their hypotheses for the lower large deviations bound are harder to verify;  Pfister and Sullivan used an ad hoc argument in the case of the MME of a $\beta$-transformation, but this approach has so far not been extended to other settings. 
Our approach to the lower large deviations bound is similar in philosophy to Pfister and Sullivan but has some important novelties:
\begin{enumerate}
\item An axiomatic approach which explicitly shows how to derive large deviations results from a weak Gibbs property which is known to hold for a large class of equilibrium states;
\item Hypotheses which in many examples are more convenient to verify than those of \cite{PfS};
\item A crucial step in our proof is to establish entropy density of the ergodic measures \emph{supported on horseshoes} (see Theorem \ref{prop:ent-appr0}). This is a point of independent interest in our approach, which is analogous to the Katok horseshoe theorem in smooth dynamics.
\end{enumerate}
We discuss the relationship between our hypotheses and those of Pfister and Sullivan in \S \ref{sec:lowerenergy}.
We expect  that our approach can be adapted to non-symbolic topological dynamical systems, where analogues of some of the ideas in this paper have been introduced in \cite{CT3, CT4}. 

\subsection*{A `horseshoe' theorem}

The following result is a key step in our proof of Theorem~\ref{thm:main}, and clarifies item (3) from the previous section. A more precise version of this statement is proved as Proposition \ref{prop:ent-appr}.

\begin{thma}\label{prop:ent-appr0}
Let $X$ be a shift space and suppose that $\GGG\subset \LLL(X)$ satisfies~\ref{A.spec} and~\ref{A.ASD}.
Then there exists a family $\{X_n\}_{n\in \NN}$ of transitive sofic subshifts of $X$ such that every invariant measure on $X$ is entropy approachable by ergodic measures on $X_n$; that is, for any $\eta>0$, any $\mu\in \Ms(X)$, and any neighborhood $U$ of $\mu$ in $\Ms(X)$, there exist $n\ge 1$ and $\mu'\in\Mse(X_n)\cap U$ such that $h(\mu')>h(\mu)-\eta$.  
\end{thma}

\subsection*{Comparison with other approaches}
Large deviations results in dynamics have received a great deal of attention since their introduction in the 1980's \cite{OP1, OP,T1,T2, De, Ki1, Y, AP, E, E2}.  We recommend the introductions of \cite{Va, MN, RBY,HR, ChT1} for extensive references and discussion. In particular, the `functional approach' and the `tower approach' are powerful alternatives to the `orbit-gluing' approach developed here.  

The `functional approach' as described in \cite{HR} relies on differentiability of a certain functional on the space of observables and this approach has been used successfully in e.g. \cite{Ki1, T1, T2, Lo, HR} for examples including rational maps of the Riemann sphere. 
The key requirements are a weak version of the Gibbs property, such as \cite[(1.5)]{HR}, and the existence of a dense subspace $W\subset C(X)$ such that every $\psi\in W$ has a unique equilibrium state. 
For $\beta$-shifts and $S$-gap shifts, the dense subspace condition is satisfied, but we do not see any obvious way to verify \cite[(1.5)]{HR} for these examples.

The `tower approach' relies on relating the original system to a countable state Markov shift via a tower construction, and yields results on both exponential and sub-exponential rates of decay. These results typically use an SRB reference measure. The case of more general reference measures is largely unexplored using this approach. Classic and recent references include \cite{RBY, KN, PSY, PS, MN, Me, Ch, ChT1}.


\subsection*{Layout of the paper} 
In \S \ref{sec:defs}, we establish our definitions.  In \S \ref{sec:consequences}, we give various consequences of Theorem \ref{thm:main} using the thermodynamic results developed in \cite{CT,CT2}, including applications to $\beta$-shifts, $S$-gap shifts, and their factors. In \S \ref{sec:pfs}, we prove Theorem \ref{thm:main}. In \S \ref{sec:apps}, we give proofs that the examples ($\beta$-shifts and $S$-gap shifts) satisfy the conditions of Theorem \ref{thm:main}. In \S \ref{sec:lowerenergy}, we make the connection between our hypotheses and the weak lower energy functions of Pfister and Sullivan. The proofs of lemmas which are not proved in the body of the text appear in \S \ref{sec:lemmas}. 

\subsection*{Acknowledgments}

We are grateful to Zheng Yin for pointing out an error in an early version of this paper. 
We would also like to thank Henri Comman for useful discussions, and the anonymous referee for helpful suggestions.

\section{Definitions}\label{sec:defs}
Let $A$ be a finite set and $A^\NN$ (resp.\ $A^\ZZ$) be the set of all one-sided (resp.\ two-sided) infinite sequences on the alphabet $A$, with the standard metric $d(x,y) = 2^{-t(x,y)}$, where $t(x,y) = \min \{ |k|\mid x_k\neq y_k\}$. 
The shift map on $A^\NN$ is $\sigma\colon x_1x_2\cdots \mapsto x_2x_3\cdots$, and the shift map on $A^\ZZ$ is defined analogously.  A subshift is a closed $\sigma$-invariant set $X\subset A^\NN$ or $X\subset A^\ZZ$. All of the results and proofs in this paper apply equally to one-sided and two-sided shifts, so we treat both cases simultaneously.

\subsection{Large deviations principles} \label{intro:ldp}
The large deviations principle referred to in Theorem \ref{thm:main} describes the rate of convergence of empirical averages relative to a fixed reference measure $m$. More precisely, one writes $\delta_y$ for the Dirac measure concentrated at $y$, and $\mathcal{E}_n(x) := \frac 1n \sum_{j=0}^{n-1} \delta_{\sigma^jx}$ for the \emph{empirical measure} associated to the orbit segment $x,\sigma(x),\dots,\sigma^{n-1}(x)$.  If $m$ is $\sigma$-invariant and ergodic, then $\mathcal{E}_n(x) \to m$ for $m$-a.e.\ $x$, and one can quantify this convergence by studying the rate of decay of $m\{x\mid \mathcal{E}_n(x)\in U\}$, where $U$ is a suitable subset of the space of all probability measures on $X$. For the systems studied here, 
this quantity decays exponentially in $n$ whenever $m\notin U$; thus the goal is to describe \emph{rate functions} $\underline r(U)$ and $\overline r(U)$ that bound the lower and upper limits of $\frac 1n \log m\{x \mid \mathcal{E}_n(x) \in U\}$.
This type of result is called level-2 large deviations. One may also consider level-1 large deviations and study $m\{x\mid \frac 1n S_n\ph(x)\in V\}$ for some fixed observable $\ph$ and some $V\subset \mathbb{R}$.  For continuous $\ph$, level-2 results  imply level-1 results via the contraction principle (see \cite{HR}). 

We now state the level-2 large deviations principle precisely. 
Denote by $\mathcal{M}(X)$ the set of all Borel probability measures on $X$ with the weak* topology.  This topology is induced by the metric
\[
D(\mu,\nu):=\sum_{n=1}^{\infty}\frac{|\int\varphi_n\, d\mu-\int\varphi_n \,d\nu|}{2^{n+1}\|\varphi_n\|_{\infty}},
\]
where $\{\ph_n\}\subset C(X)$ is a countable dense subset.  Let $\MMM_\sigma(X)\subset \MMM(X)$ be the set of $\sigma$-invariant measures, and let $\MMM_\sigma^e(X)\subset \MMM_\sigma(X)$ be the set of ergodic measures.

\begin{definition}\label{def:LDP}
We say that the system $(X,\sigma)$ satisfies a \emph{level-2 large deviations principle} with a reference measure $m\in\mathcal{M}(X)$ and a rate function $q\colon\mathcal{M}(X)\to [-\infty,0]$ if $q$ is upper semicontinuous,
\begin{displaymath}
\varliminf_{n\rightarrow\infty}\frac{1}{n}\log m\left(\left \{x\in X
\mid \EEE_n(x) \in U \right \}\right)\ge\sup_{\mu\in U}q(\mu)
\end{displaymath}
holds for any open set $U\subset\mathcal{M}(X)$, and
\begin{displaymath}
\varlimsup_{n\rightarrow\infty}\frac{1}{n}\log m\left(\left \{x\in X
\mid \EEE_n(x) \in F \right \}\right)\le\sup_{\mu\in F}q(\mu)
\end{displaymath}
holds for any closed set $F\subset\mathcal{M}(X)$. 
\end{definition}

\subsection{Languages and decompositions}

The \emph{language of $X$}, denoted by $\LLL = \LLL(X)$, is the set of finite words that appear in some $x\in X$ -- that is,
\[
\LLL(X) = \{w\in A^* \mid [w]\neq \emptyset\},
\]
where $A^* = \bigcup_{n\geq 0} A^n$ and $[w]$ is the central cylinder for $w$, which in the one-sided case is the set of sequences $x\in X$ that begin with the word $w$. Given $w\in\LLL$, let $|w|$ denote the length of $w$. For any collection $\DDD \subset \LLL$, let $\DDD_n$ denote $\{w\in \DDD \mid |w| = n\}$. Thus, $\LLL_n$ is the set of all words of length $n$ that appear in sequences belonging to  $X$. Given words $u, v$, we use juxtaposition $uv$ to denote the word obtained by concatenation. 

A \emph{decomposition} for $\LLL$ is a choice of three sets of words $\CCC^p,\GGG,\CCC^s\subset \LLL$, together with a map from $\LLL$ to $\CCC^p \times \GGG \times \CCC^s$ which assigns to a word $w\in\LLL$ a triple of words $u^p\in\CCC^p, v\in\GGG,u^s\in\CCC^s$ such that $w=u^pvu^s$.
We write $\LLL = \CCC^p \GGG \CCC^s$ when the language can be decomposed in this way.\footnote{In \cite{CT}, we defined a decomposition slightly differently - there, we assumed there exists (possibly multiple) ways to decompose each word. Our definition there did not carry the information on \emph{which} decomposition to use for a given word like we do here. See Kwietniak, Oprocha and Rams \cite{KOR} for a clarification of this issue.}  We make a standing assumption that $\emptyset \in \CCC^p, \GGG, \CCC^s$ to allow for words in $\LLL$ that belong purely to one of the three collections (this is also implicit in \cite{CT, CT2}).

Once a decomposition $\LLL= \CCC^p\GGG\CCC^s$ has been fixed, we consider for each $M\in\NN$ the set
\begin{equation}\label{eqn:GM}
\GGG^M := \{ w \mid \mbox{the decomposition } w=u^pvu^s \mbox{ has } |u^p|, |u^s| \leq M\}.
\end{equation}
Note that $\LLL = \bigcup_{M\in\NN} \GGG^M$, so this defines a filtration of the language.

\subsection{Entropy and pressure for shift spaces}

Given a collection $\DDD\subset \LLL$, the \emph{entropy} of $\DDD$ is
\[
h(\DDD) := \ulim_{n\to\infty} \frac 1n \log \#\DDD_n,
\]
where $\DDD_n = \{w\in\DDD \mid |w|=n\}$.  The entropy of an invariant measure $\mu\in\Ms(X)$ is $
h(\mu) := \lim_{n\to\infty} \frac 1n \sum_{w\in\LLL_n} -\mu[w]\log\mu[w].
$
For a fixed potential function $\ph\in C(X)$, the \emph{pressure} of $\DDD\subset \LLL$ is 
\[
P(\DDD,\ph) := \ulim_{n\to\infty} \frac 1n \log \Lambda_n(\DDD,\ph),
\]
where
\[
\Lambda_n(\DDD,\ph) = \sum_{w\in\DDD_n} e^{\sup_{x\in[w]} S_n\ph(x)}
\]
and $S_n\ph(x) = \sum_{k=0}^{n-1} \ph(\sigma^kx)$.  We write $P(\ph) = P(\LLL,\ph)$.

We will be primarily concerned with potentials having some extra regularity: we say that $\ph$ has the \emph{Bowen property on $\DDD$} if there is $V\in\RR$ such that for every $n\in\NN$, every $w\in\DDD_n$, and every $x,y\in [w]$, we have $|S_n\ph(x) - S_n\ph(y)| \leq V$.  In particular, if $\ph$ is H\"older continuous then it has the Bowen property on every $\DDD\subset \LLL$.

\subsection{Specification}

We define the specification properties that appear in this paper, and the relationships between them.
\begin{definition}\label{def:spec}
Given a shift space $X$ and its language $\LLL$, consider a subset $\GGG \subset \LLL$.  Given $\tau\in \NN$, we say that $\GGG$ has \emph{(W)-specification with gap length $\tau$} if for every $v,w\in \GGG$ there is $u\in \LLL$ such that $vuw\in \GGG$ and $|u|\leq \tau$.
\end{definition} 
In the case $\GGG = \LLL$, this is equivalent to the well-known weak specification property for the shift. If the gluing word $u$ can always be taken to have length exactly $\tau$, we say that $\GGG$ has (S)-specification.  An important special case, which corresponds to specification with gap length $0$, is the following. 
\begin{definition}\label{def:morespec}
We say that $\GGG\subset \LLL$ has 
the \emph{free concatenation} property 
if for all $u, w \in \GGG$, we have $uw\in \GGG$.
\end{definition}

\begin{remark}
The definition of specification on $\GGG$, rather than on all of $\LLL$, was introduced by the first two authors in \cite{CT}.  The definition above has an important difference from the (W)-specification property in \cite{CT}.  There, we made the weaker requirement that a finite collection of words $w^1, \dots w^n \in \GGG$ can be glued to form a word in $\LLL$; that is, there exist words $u^1, \ldots, u^{m-1} $ with length at most $\tau$ such that $w^1u^1w^2u^2 \cdots u^{m-1}u^m \in \LLL$. 
We require the stronger property of Definition \ref{def:spec} for the arguments in \S\ref{free:concat}, which allow us to replace $\GGG$ with a collection $\FFF$ having the free concatenation property; this is necessary for our construction of horseshoes. The stronger property is natural in the symbolic setting, and is satisfied for all our motivating examples.
\end{remark}

\subsection{Edit Approachability}
First we introduce the edit metric (sometimes known as the Damerau--Levenshtein metric) on $\LLL$.

\begin{definition}
Define an \emph{edit} of a word $w = w_1\cdots w_n \in\LLL$ to be a transformation of $w$ by one of the following actions, where $u^j\in\LLL$ are arbitrary words and $a,a'\in A$ are arbitrary symbols.
\begin{enumerate}
\item \emph{Substitution}: $w=u^1au^2 \mapsto w' = u^1a'u^2$.
\item \emph{Insertion}: $w=u^1u^2 \mapsto w' = u^1a'u^2$.
\item \emph{Deletion}: $w=u^1au^2\mapsto w' = u^1u^2$.
\end{enumerate}
Given $v,w\in\LLL$, define the \emph{edit distance} between $v$ and $w$ to be the minimum number of edits required to transform the word $v$ into the word $w$: we will denote this by $\hd(v,w)$.
\end{definition}
The following lemma about the size of balls in the edit metric will be crucial for our entropy estimates.
\begin{lemma}\label{lem:edit-balls}
There is $C>0$ such that given $n\in \NN$, $w\in\LLL_n$, and $\delta>0$, we have
\begin{equation}\label{eqn:edit-ball}
\#\{v\in\LLL\mid \hd(v,w)\leq \delta n \} \leq C n^C \left( e^{C\delta} e^{-\delta\log\delta}\right)^n.
\end{equation}
\end{lemma}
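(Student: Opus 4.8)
The plan is to count words $v$ within edit distance $\delta n$ of a fixed word $w \in \LLL_n$ by encoding each such $v$ via the data of an optimal edit sequence transforming $w$ into $v$. Write $k = \lfloor \delta n \rfloor$. Any $v$ with $\hd(v,w) \le \delta n$ is obtained from $w$ by some sequence of at most $k$ edits, so it suffices to bound the number of distinct outcomes of sequences of exactly $k$ edits (padding shorter sequences with trivial substitutions, or absorbing them — the point is only an upper bound). I will overcount by recording, for each edit in the sequence, (i) its type (substitution, insertion, or deletion — at most $3$ choices), (ii) the position in the current word at which it acts (at most $n + k \le 2n$ choices, say, since the word length never exceeds $n + k$), and (iii) in the case of a substitution or insertion, the new symbol $a' \in A$ (at most $|A|$ choices). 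Thus the number of edit sequences of length $k$ is at most $(3 \cdot 2n \cdot |A|)^k = (6|A|n)^k$, and hence
\[
\#\{v \in \LLL \mid \hd(v,w) \le \delta n\} \le (k+1) (6|A| n)^k,
\]
the factor $(k+1)$ accounting for the choice of how many edits $j \le k$ are actually used.

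The next step is to convert this bound into the stated form. The factor $(k+1) \le n+1$ is polynomial and can be absorbed into $C n^C$. For the main term, $(6|A|n)^k$ with $k \le \delta n$: writing it as $(6|A|)^k n^k \le (6|A|)^{\delta n} n^{\delta n}$. The factor $(6|A|)^{\delta n} = (e^{C_0 \delta})^n$ with $C_0 = \log(6|A|)$, contributing to the $e^{C\delta n}$ term. The factor $n^{\delta n}$ needs handling: naively $n^{\delta n}$ is not bounded by $(\text{const})^n$, so the straightforward positional count is too lossy and must be refined. The fix is standard: instead of choosing positions independently for each of the $k$ edits, first choose the \emph{set} of positions in $w$ (and of insertion gaps) that are touched — there are at most $\binom{2n}{k}$ such choices — and then for each touched position choose the edit type and new symbol, giving at most $\binom{2n}{k}(3|A|)^k$ total. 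By Stirling, $\binom{2n}{k} \le (2en/k)^k = (2e/\delta)^{\delta n}$ (using $k \le \delta n$, and monotonicity considerations for $k < \delta n$), and $(2e/\delta)^{\delta n} = (e^{\delta \log(2e/\delta)})^n = (e^{\delta(1 + \log 2)} e^{-\delta \log \delta})^n$. Combining with $(3|A|)^k \le (3|A|)^{\delta n} = (e^{\delta \log(3|A|)})^n$ yields
\[
\#\{v \in \LLL \mid \hd(v,w) \le \delta n\} \le C n^C \big(e^{C\delta} e^{-\delta \log \delta}\big)^n
\]
for a suitable $C$ depending only on $|A|$, as claimed.

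One technical point to be careful about: when $\delta$ is large (say $\delta \ge 1$ or larger), $-\delta \log \delta < 0$ and the claimed bound could in principle be weaker than trivial bounds, but this causes no problem since the statement is an upper bound and we are free to also bound the count by, e.g., $|A|^{\,(1+\delta)n}$ (every $v$ within $k$ edits has length $\le n + k$); enlarging $C$ handles both regimes, and the regime of interest for the large deviations application is small $\delta$. A second point: the edit sequence realizing $\hd(v,w)$ might not be unique and intermediate words need not lie in $\LLL$, but this is irrelevant — we are only using edit sequences to \emph{parametrize} (with multiplicity) a superset of the target ball, so dropping the constraint that intermediate words be admissible only increases the count.

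The main obstacle is the refinement in the second paragraph: getting the genuine $e^{-\delta \log \delta}$ savings rather than the lossy $n^{\delta n}$ requires counting \emph{which} positions are edited as a single binomial coefficient rather than as a product of per-edit position choices, and then invoking Stirling's approximation carefully (including the case $k < \delta n$, where one should check that $\binom{2n}{k}$ is maximized, over the relevant range, at $k = \lfloor \delta n \rfloor$ or bound it uniformly). Everything else is bookkeeping.
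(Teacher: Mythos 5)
Your strategy --- reduce the count of edit positions to a single binomial coefficient and then apply Stirling --- is exactly what the paper does, and your Stirling calculation is the right one (the paper applies it to a slightly different binomial coefficient, but the asymptotics are the same). The paper's encoding: interleave $m$ marker symbols $e$ among the $n$ letters of $w$ to form a word $w'$ of length $n+m$ (at most $\smatrix{n+m\\n}$ placements), perform one of $2\#A+2$ actions at each marker, then erase the markers.

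Your refined encoding, however, has a gap. You choose a \emph{set} of $k$ spots among the $n$ letters and $n+1$ gaps of $w$ (at most $\smatrix{2n\\k}$ choices) and perform exactly one edit at each chosen spot. This map cannot produce a target $v$ that needs two or more insertions at the same gap of $w$: take $w=ab$ and $v=aXYb$, which requires two insertions at the single gap between $a$ and $b$, so the touched set has one element but a single edit there is not enough. More generally, any $v$ with $|v|-|w|$ strictly greater than the number of gaps the alignment touches is unreachable by your parametrization, so the inequality $\#\{v:\hd(v,w)\le k\}\le\smatrix{2n\\k}(3\#A)^k$ is not established by the argument as written (it does happen to hold, since $\smatrix{2n\\k}\ge\smatrix{n+k\\k}$ for $k\le n$, but that observation presupposes a correct count to compare against). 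The paper's marker device repairs precisely this point: the markers are placed inside a string of length $n+m$, not inside $w$, so adjacent markers encode repeated insertions at one original gap, yielding $\smatrix{n+m\\m}$ in place of your $\smatrix{2n\\k}$. With that replacement your Stirling bookkeeping goes through unchanged, and your remarks about the regime $\delta\ge1$, the non-uniqueness of optimal edit sequences, and intermediate words possibly lying outside $\LLL$ are all correct and worth keeping.
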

Now we can introduce our key definition, which requires that any word in $\LLL$ can be transformed into a word in $\GGG$ with a relatively small number of edits.

\begin{definition} \label{editapproach}
Say that a non-decreasing function $g\colon\NN\to\NN$ is a \emph{mistake function} if $\frac{g(n)}{n}$ converges to $0$.  We say that $\LLL$ is \emph{edit approachable} by $\GGG$, where $\GGG\subset \LLL$,  if there is a mistake function $g$ such that for every $w\in\LLL$, there exists $v\in\GGG$ with $\hat{d}(v,w)\leq g(|w|)$.
\end{definition} 
If $\LLL$ is edit approachable by $\GGG$, and $\GGG$ satisfies the specification property, then it is easy to see that the symbolic space satisfies the following global specification property, which we could call the \emph{almost specification after edits} property:  there exists a mistake function $g$ so that for any words $u, v \in \LLL$, there are words $u', v' \in \LLL$ so that $u'v' \in \LLL$, $\hat{d}(u, u') \leq g(|u|)$ and $\hat{d}(v, v') \leq g(|v|)$.

Edit approachability allows us to replace sufficiently long words in $\LLL$ with words in $\GGG$ in such a way that estimates on Birkhoff averages, and thus estimates on empirical measures, can be well controlled, while at the same time, \eqref{eqn:edit-ball} guarantees that not much entropy is lost this way.  

Control on the Birkhoff averages is given by the following lemma. 

\begin{lemma}\label{lem:stat-near}
For any continuous function $\ph\colon X\to \RR$ and any mistake function $g(n)$, there is a sequence of positive numbers $\delta_n\to 0$ such that if $x,y\in X$ and $m,n\in \NN$ are such that $\hat d(x_1\cdots x_n,\, y_1\cdots y_m)\leq g(n)$, then $|\frac 1n S_n\ph(x) - \frac 1m S_m\ph(y)| \leq \delta_n$.
\end{lemma}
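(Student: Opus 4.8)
The plan is to compare the two normalized Birkhoff sums directly, using an optimal \emph{alignment} of the prefix words $x_1\cdots x_n$ and $y_1\cdots y_m$ underlying the edit-distance bound, rather than inducting on individual edits. Write $\ell:=\hd(x_1\cdots x_n,\,y_1\cdots y_m)\le g(n)$; since substitutions preserve length while insertions and deletions change it by one, $\abs{n-m}\le\ell$. Since $\ph$ is continuous on the compact space $X$ it is bounded and uniformly continuous, so its variations $\operatorname{var}_L(\ph):=\sup\{\abs{\ph(u)-\ph(w)}:u_i=w_i\text{ for }\abs{i}\le L\}$ (with $1\le i\le L$ in the one-sided case) tend to $0$ as $L\to\infty$; these, together with $\|\ph\|_\infty$, are the only features of $\ph$ I would use.

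First I would record that a minimal edit script of length $\ell$ corresponds to an alignment of $x_1\cdots x_n$ with $y_1\cdots y_m$ having at most $\ell$ non-matching columns, and that reading off its matching columns gives an order-preserving injection $p\mapsto q(p)$ defined on some $G_0\subset\{1,\dots,n\}$ with $\#G_0\ge n-\ell$ and $x_p=y_{q(p)}$ for $p\in G_0$. Then, for a scale $L\in\NN$ to be chosen as a function of $n$, I would call $p\in G_0$ \emph{$L$-good} if the block of order $L$ around $p$ in $x_1\cdots x_n$ matches symbol-for-symbol the corresponding block around $q(p)$ in $y_1\cdots y_m$ (a forward block in the one-sided case, a two-sided block in the two-sided case). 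A short counting argument --- each of the $\le\ell$ non-matching columns can spoil this property for at most $O(L)$ positions, and only $O(L)$ more positions lie too near the ends of the words --- shows the set $G$ of $L$-good positions satisfies $\#(\{1,\dots,n\}\setminus G)\le C_0L(\ell+1)$ for an absolute constant $C_0$. For $p\in G$, the sequences $\sigma^{p-1}x$ and $\sigma^{q(p)-1}y$ agree on a block of $L$ coordinates, so $\abs{\ph(\sigma^{p-1}x)-\ph(\sigma^{q(p)-1}y)}\le\operatorname{var}_L(\ph)$.

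The main estimate would then come from the telescoping decomposition
\begin{align*}
\frac1n S_n\ph(x)-\frac1m S_m\ph(y)
&=\frac1n\sum_{p\in G}\bigl(\ph(\sigma^{p-1}x)-\ph(\sigma^{q(p)-1}y)\bigr)
 +\frac1n\sum_{p\in\{1,\dots,n\}\setminus G}\ph(\sigma^{p-1}x)\\
&\quad+\frac1n\sum_{p\in G}\ph(\sigma^{q(p)-1}y)-\frac1m\sum_{q=1}^{m}\ph(\sigma^{q-1}y).
\end{align*}
The first sum is bounded by $\operatorname{var}_L(\ph)$; the second by $C_0L(\ell+1)\|\ph\|_\infty/n$; and the third --- using $\abs{n-m}\le\ell$, $m\ge n-\ell$, and that $\{q(p):p\in G\}$ omits at most $\ell+C_0L(\ell+1)$ indices from $\{1,\dots,m\}$ --- by $C_1L(\ell+1)\|\ph\|_\infty/n$ once $n$ is large enough that $m\ge n/2$. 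Combining, for all large $n$,
\[
\abs{\tfrac1n S_n\ph(x)-\tfrac1m S_m\ph(y)}\le\operatorname{var}_L(\ph)+C_2\,\frac{L(g(n)+1)}{n}\,\|\ph\|_\infty .
\]
Since $g(n)/n\to0$, I would finally choose $L=L(n)\to\infty$ slowly enough that $L(n)(g(n)+1)/n\to0$ (for instance $L(n)=\lfloor(n/(g(n)+1))^{1/2}\rfloor$) and set $\delta_n$ equal to the right-hand side above, replacing its finitely many initial values by $2\|\ph\|_\infty$; then $\delta_n\to0$ and $\delta_n$ has the stated property.

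The step I expect to be the real (if modest) obstacle is arranging the comparison so that the $o(n)$ many edits do not accumulate an $\Omega(n)$ error: a naive induction on edits fails both because the intermediate words of an edit script need not lie in $\LLL$ (so there is no orbit or Birkhoff average to attach to them) and because per-edit bounds can stack badly when $\ph$ has only slowly decaying variations. The alignment keeps everything inside $X$, and the counting bound $\#(\{1,\dots,n\}\setminus G)\le C_0L(\ell+1)$ --- each edit pollutes only $O(L)$ positions --- together with the freedom to take $L$ growing arbitrarily slowly, is precisely what prevents the error from stacking. The one-sided and two-sided cases run in parallel, with ``block around $p$'' read one-sidedly or two-sidedly.
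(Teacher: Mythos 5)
Your argument is correct, and it proves the same estimate by essentially the same mechanism as the paper; the difference is entirely in the bookkeeping, but it is worth spelling out. The paper marks the $k\le g(n)$ edit positions in $x_1\cdots x_n$, thereby splitting the word into $k+1$ segments $w^1_r,\dots,w^{k+1}_r$ that survive unchanged in both $x_1\cdots x_n$ and $y_1\cdots y_m$ (up to appending at most two symbols at the right end of each), and then bounds $|S_n\ph(x)-S_m\ph(y)|\le\sum_j V(|w^j_r|)$, where $V(n)$ is the supremum of $|S_{m'}\ph(x)-S_m\ph(y)|$ over $x_1\cdots x_n=y_1\cdots y_n$, $m,m'\in\{n,n+1,n+2\}$; the sum is then split according to whether $|w^j_r|$ is below or above the threshold $C_n=\sqrt{n/\hat g(n)}$, using the crude bound $V(n)=O(n\|\ph\|)$ for short segments and the Ces\`aro decay $\tfrac1nV(n)\to 0$ for long ones. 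You instead decompose position by position: using the alignment, you call a position $L$-good if an $L$-block around it matches, bound the at most $O(L(\ell+1))$ bad positions by $\|\ph\|_\infty$ each, and bound the good ones by $\operatorname{var}_L(\ph)$. The threshold $L\sim\sqrt{n/(g(n)+1)}$ is precisely the paper's $C_n$, and $\tfrac 1n V(n)\to 0$ carries the same information as $\operatorname{var}_L(\ph)\to 0$ (Ces\`aro vs.\ pointwise). What your version buys is a more transparent, ``Hamming-style'' argument with a cleaner explicit error bound $\operatorname{var}_L(\ph)+C\,L(g(n)+1)\|\ph\|_\infty/n$; what the paper's version buys is that the segment-wise functional $V(n_j)$ automatically adapts the scale to each segment, so no global $L$ has to be fixed in advance and there is no need to set up the alignment/injection machinery explicitly. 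Both are valid; one minor point worth keeping in mind in your write-up is that in the two-sided case the $L$-block around $p$ must stay inside $\{1,\dots,n\}$ on both sides, which doubles the boundary count from $L$ to $2L$ --- your $O(L)$ already absorbs this, but it should be said.
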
 
Although we do not use it in this paper, we prove the following consequence of Lemma \ref{lem:edit-balls} and \ref{lem:stat-near} in \S \ref{sec:lemmas}.

\begin{proposition}\label{prop:full-pressure}
If $\LLL$ is edit approachable by $\GGG$, then $P(\GGG,\ph) = P(\ph)$ for every $\ph\in C(X)$.
\end{proposition}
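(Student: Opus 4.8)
Since $\GGG\subset\LLL$, the definition of pressure gives $P(\GGG,\ph)\le P(\LLL,\ph)=P(\ph)$ at once, so the content is the reverse inequality $P(\ph)\le P(\GGG,\ph)$, which I would prove by a replacement argument. Fix a mistake function $g$ witnessing that $\LLL$ is edit approachable by $\GGG$, and for each $n$ choose a map $\Phi\colon\LLL_n\to\GGG$ with $\hd(w,\Phi(w))\le g(n)$ for every $w\in\LLL_n$. Since each edit changes the length by at most one, $\abs{\,|\Phi(w)|-n\,}\le g(n)$, so $\Phi(\LLL_n)\subset\bigcup_{|m-n|\le g(n)}\GGG_m$. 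The aim is to bound $\Lambda_n(\LLL,\ph)$ by the sum of $\Lambda_m(\GGG,\ph)$ over this range of $m$, times a subexponential factor.

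The multiplicity of $\Phi$ is controlled by Lemma~\ref{lem:edit-balls}: if $\Phi(w)=v$ then $w$ lies in the edit-ball of radius $g(n)$ about $v$, so with centre $v\in\LLL_{|v|}$ and $\delta=g(n)/|v|$ the lemma gives $\#\Phi^{-1}(v)\le C|v|^{C}\bigl(e^{C\delta}e^{-\delta\log\delta}\bigr)^{|v|}$. For $v\in\Phi(\LLL_n)$ we have $|v|\le 2n$ and $\delta\le\epsilon_n:=g(n)/(n-g(n))\to0$ once $n$ is large, and $x\mapsto e^{Cx-x\log x}$ is increasing near $0$ with value at least $1$ there, so this bound is uniform: $\#\Phi^{-1}(v)\le K_n$ with $\frac1n\log K_n\to0$. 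The Birkhoff sums are controlled by Lemma~\ref{lem:stat-near}: it produces $\delta_n\to0$ with $\bigl|\frac1nS_n\ph(x)-\frac1{|v|}S_{|v|}\ph(y)\bigr|\le\delta_n$ whenever $x\in[w]$, $y\in[v]$ and $v=\Phi(w)$; together with $\abs{\,|v|-n\,}\le g(n)$ and $\abs{S_m\ph(y)}\le m\|\ph\|_\infty$ this yields
\[
\sup_{x\in[w]}S_n\ph(x)\le\sup_{y\in[v]}S_{|v|}\ph(y)+n\delta_n+g(n)\|\ph\|_\infty .
\]

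Summing over $w\in\LLL_n$ and grouping the right-hand side according to the value $v=\Phi(w)$ gives
\[
\Lambda_n(\LLL,\ph)\le e^{n\delta_n+g(n)\|\ph\|_\infty}K_n\sum_{|m-n|\le g(n)}\Lambda_m(\GGG,\ph).
\]
Given $\epsilon>0$, for all large $n$ every $m$ with $|m-n|\le g(n)$ lies past the threshold beyond which $\Lambda_m(\GGG,\ph)\le e^{m(P(\GGG,\ph)+\epsilon)}$ (note $P(\GGG,\ph)$ is finite, being at least $-\|\ph\|_\infty$ since $\GGG_m\neq\emptyset$ for all large $m$ by edit approachability), so the sum above is at most $(2g(n)+1)e^{n(P(\GGG,\ph)+\epsilon)+g(n)\lvert P(\GGG,\ph)+\epsilon\rvert}$. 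Applying $\frac1n\log(\cdot)$ and then $\varlimsup_{n\to\infty}$ makes all error terms vanish (each is a bounded multiple of $\delta_n$, of $g(n)/n$, or of $\frac1n\log K_n$), leaving $P(\ph)\le P(\GGG,\ph)+\epsilon$; letting $\epsilon\to0$ completes the proof. I expect the only delicate step to be the uniform multiplicity estimate: one must check that invoking Lemma~\ref{lem:edit-balls} at length $|v|\asymp n$ and radius $g(n)$ costs only $e^{o(n)}$, which boils down to $\frac{g(n)}{n}\log\frac{n}{g(n)}\to0$, i.e.\ $t\log(1/t)\to0$ as $t\to0^+$. Everything else is routine accounting of subexponential factors.
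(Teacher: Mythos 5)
Your proof is correct and mirrors the paper's argument: the trivial inequality $P(\GGG,\ph)\le P(\ph)$ is dispatched first, a replacement map $\LLL_n\to\GGG$ is introduced, its multiplicity is controlled by Lemma~\ref{lem:edit-balls}, the Birkhoff sums are compared via Lemma~\ref{lem:stat-near}, and the resulting subexponential errors are absorbed into the pressure limsup. The only cosmetic difference is in the last step, where the paper extracts a single large $\Lambda_m(\GGG,\ph)$ by pigeonhole from the sum over $m\in[n-g(n),n+g(n)]$, whereas you bound each $\Lambda_m(\GGG,\ph)$ from above by $e^{m(P(\GGG,\ph)+\epsilon)}$ for large $m$; both are routine, and your parenthetical on the finiteness of $P(\GGG,\ph)$ needs only the (true) observation that $\GGG_m\neq\emptyset$ for infinitely many $m$, not all large $m$.
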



\subsection{Hamming Approachability} \label{sec:Hamming}
For $v, w \in \LLL_n$, let $d_{\Ham}$ denote the Hamming distance between $v$ and $w$. This is the number of substitutions it takes to transform $v$ into $w$; insertions and deletions are not allowed, and in particular $v,w$ must have the same length.

\begin{definition}
We say that $\LLL$ is \emph{Hamming approachable} by $\GGG$, where $\GGG\subset \LLL$,  if there is a mistake function $g$ such that for every $w\in\LLL$, there exists $v\in\GGG$ with $d_{\Ham}(v,w)\leq g(|w|)$.
\end{definition}

Clearly, if $\LLL$ is Hamming approachable by $\GGG$, then $\LLL$ is edit approachable by $\GGG$. If $\LLL$ is Hamming approachable by $\GGG$, and $\GGG$ satisfies (S)-specification, then it is easy to see that the symbolic space satisfies the \emph{almost specification} property of \cite[\S3.3]{CT}:  there exists a mistake function $g$ so that for any words $u, v \in \LLL$, there are words $u', v' \in \LLL$ so that $u'v' \in \LLL$, $d_{\Ham}(u, u') \leq g(|u|)$ and $d_{\Ham}(v, v') \leq g(|v|)$.  
\subsection{Gibbs properties}
The standard Gibbs property for a shift space says that a measure $m\in\MMM(X)$ is \emph{Gibbs} if there are constants $K,K'>0$ such that
\begin{equation}\label{eqn:Gibbs}
K \leq \frac{m[x_1\cdots x_n]}{e^{-nP(\ph) + S_n\ph(x)}} \leq K'
\end{equation}
for all $x \in X$ and $n \in \NN$. We will require that the upper bound hold uniformly, while the lower bound will only be required to hold when $x_1\cdots x_n\in \GGG$.  More precisely, we make the following definition for a collection $\GGG\subset \LLL$.
\begin{definition}\label{def:Gibbs}
A measure $m\in\MMM(X)$ is \emph{Gibbs for $\ph$ with respect to $\GGG$} if 
there are constants $K,K'>0$ such that
\begin{equation}\label{eqn:upperGibbs}
m[x_1\cdots x_n] \leq K' e^{-nP(\ph) + S_n\ph(x)}
\end{equation}
for every $x\in X$ and $n\in \NN$, and 
\begin{equation}\label{eqn:lowerGibbs}
m[x_1\cdots x_n] \geq K e^{-nP(\ph) + S_n\ph(x)}
\end{equation}
whenever $x\in X$ and $n\in \NN$ are such that $x_1\cdots x_n\in \GGG$.
\end{definition}
Definition \ref{def:Gibbs} is property \ref{A.Gibbs} of Theorem \ref{thm:main}.  Theorem \ref{thm:CT} provides examples of measures satisfying this definition.

\subsection{Properties under factors} \label{sec:factors}

One advantage of our techniques is that they behave well under factors. We let $\Sigma$ be a shift space, and we let $\GGG \subset \LLL(\Sigma)$. Suppose that $X$ is a subshift factor of $\Sigma$, that is, there exists a continuous surjective map
$\pi\colon \Sigma \to X$ such that $\sigma\circ\pi=\pi\circ \sigma$. By \cite[Theorem 6.29]{LM},
$\pi$ is a block code: there exist $r\in\mathbb{N}$ and
$\psi\colon\mathcal{L}_{2r+1}\to \mathcal A$, where $\mathcal A$ is the alphabet of $X$, such that
\begin{equation}\label{eqn:block-code}
(\pi x)_n=\psi(x_{n-r}x_{n-r+1}\cdots x_{n+r-1}x_{n+r}).
\end{equation}
This induces a surjective map $\Psi\colon\mathcal{L}(\Sigma)_{n+2r}\to\mathcal{L}(X)_n$
by
\[
\Psi(w_1\cdots w_{n+2r})=\psi(w_1\cdots w_{2r+1})\psi (w_2\cdots w_{2r+2})
\cdots\psi(w_n\cdots w_{n+2r}).
\]
We set $\tilde{\GGG}=\Psi(\GGG)$. The key to our study of $X$ is that $\tilde \GGG$ inherits a number of good properties of $\GGG$, including in particular \ref{A.spec}, \ref{A.ASD}, and the condition \ref{I} that appears in Theorem \ref{thm:CT} below.

\begin{lemma} \label{Gunderfactors}
Let $\GGG \subset \LLL(\Sigma)$ and $\tilde \GGG \subset \LLL( X)$ be as above.
\begin{enumerate}
\item If $\GGG$ satisfies \ref{A.spec}, then $\tilde \GGG$ satisfies \ref{A.spec}.
\item If $\GGG$ satisfies \ref{A.ASD}, then $\tilde \GGG$ satisfies \ref{A.ASD}.
\item If $\GGG$ satisfies \ref{I}, then $\tilde \GGG$ satisfies \ref{I}.
\end{enumerate}
\end{lemma}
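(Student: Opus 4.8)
The plan is to verify each of the three properties for $\tilde{\GGG} = \Psi(\GGG)$ by pulling words in $\LLL(X)$ back to words in $\LLL(\Sigma)$, applying the corresponding property of $\GGG$, and pushing the result forward under $\Psi$, keeping careful track of the length bookkeeping introduced by the block code (the shift by $2r$ between lengths in $\Sigma$ and $X$, and the fact that $\Psi$ maps $\LLL(\Sigma)_{n+2r}$ onto $\LLL(X)_n$). Throughout I would use that $\Psi$ is essentially a sliding-block map: if $w' = \Psi(w)$ and one obtains $\hat w$ from $w$ by concatenation or by a bounded number of edits, then $\Psi(\hat w)$ differs from a concatenation (resp. a bounded number of edits) of $\Psi$-images by a controlled amount, because changing one symbol of $w$ changes at most $2r+1$ symbols of $\Psi(w)$.

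For part (1), given $\tilde w^1,\dots,\tilde w^m \in \tilde\GGG$, I would choose preimages $w^1,\dots,w^m \in \GGG$ with $\Psi(w^i) = \tilde w^i$, apply (gcW)-specification in $\Sigma$ with gap size $\tau$ to get connecting words $v^i \in \LLL(\Sigma)$, $|v^i| \le \tau$, with each $w^i v^i \cdots v^{j-1} w^j \in \GGG$, and then apply $\Psi$ to the long word $w^1 v^1 \cdots v^{m-1} w^m$. The subtlety is that $\Psi$ does not act coordinatewise on the concatenation: the image word needs to be re-expressed as a concatenation $\tilde w^1 \tilde v^1 \cdots \tilde v^{m-1} \tilde w^m$ with $\tilde v^i \in \LLL(X)$ of length $\le \tau + O(r)$; one absorbs the $2r$ overlap symbols at each junction into the connecting words (possibly shrinking the $\tilde w^i$ by a bounded amount, or equivalently redefining the preimages to be slightly longer words still in $\GGG$). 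Since $\GGG$ is closed under the relevant gluings, each sub-block $w^i v^i \cdots w^j$ lies in $\GGG$, hence its $\Psi$-image lies in $\tilde\GGG$; this gives (gcW)-specification for $\tilde\GGG$ with gap size $\tau + 2r$ (or a similar bound). For part (2), given $\tilde w \in \LLL(X)_n$, lift to $w \in \LLL(\Sigma)_{n+2r}$, use edit approachability of $\LLL(\Sigma)$ by $\GGG$ to find $v \in \GGG$ with $\hd(v,w) \le g(|w|)$, and set $\tilde v = \Psi(v) \in \tilde\GGG$. Since each edit in $\Sigma$ propagates to at most $2r+1$ edits in $X$ (a symbol substitution in $w$ affects at most $2r+1$ letters of $\Psi(w)$, and insertions/deletions similarly have bounded effect on the sliding-block image), we get $\hd(\tilde v, \tilde w) \le (2r+1) g(n+2r) + O(r)$, which is again a mistake function in $n$; this establishes \ref{A.ASD} for $\tilde\GGG$. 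Part (3) is identical in spirit once the precise statement of condition \ref{I} from Theorem~\ref{thm:CT} is available — it will be an entropy-gap condition of the form $h(\GGG) < $ (something), or a statement about $\CCC^p, \CCC^s$ being small, and the factor map can only decrease entropy of the relevant collections while the ``something'' is controlled, so the inequality is preserved; the one point requiring care is that a decomposition $\LLL(\Sigma) = \CCC^p\GGG\CCC^s$ must push forward to a decomposition $\LLL(X) = \tilde\CCC^p\tilde\GGG\tilde\CCC^s$ with $\tilde\CCC^p = \Psi(\CCC^p)$, etc., again modulo bounded-length adjustments at the two seams.

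The main obstacle I anticipate is purely bookkeeping rather than conceptual: carefully handling the $\pm 2r$ length discrepancies at block-code junctions so that (a) the words one writes down genuinely lie in $\LLL(X)$ (one must verify the concatenations are legal, which follows because they are $\Psi$-images of legal words in $\Sigma$), and (b) the ``bad'' connecting/affix words stay uniformly bounded (for specification) or of sublinear length (for edit approachability). A convenient device to streamline this is to note that one may always enlarge the lifted words $w^i$ by a bounded number of symbols on each side — since $\GGG$ is closed under the gluings in question — so that after applying $\Psi$ the overlap regions are entirely contained in the images of the $w^i$, leaving the connecting words to be exactly $\Psi$-images of the $v^i$. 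With that normalization the three verifications become short. I would present part (1) in full detail, then remark that (2) follows the same pull-back/push-forward scheme using Lemma~\ref{lem:edit-balls}-style control on how edits propagate under a sliding-block code, and defer the routine verification of (3) to once condition \ref{I} is stated, or to \S\ref{sec:lemmas}.
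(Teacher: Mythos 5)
Your lift-then-project strategy for part (2) matches the paper's: take $\tilde z\in\LLL(X)_n$, lift to $z\in\LLL(\Sigma)_{n+2r}$, find $w\in\GGG$ with $\hat d(z,w)\le g(n+2r)$, and set $\tilde w=\Psi(w)\in\tilde\GGG$. You differ in how you estimate $\hat d(\tilde z,\tilde w)$. You propose to follow the edit path $z=z^0\to z^1\to\cdots\to z^m=w$ and argue that each single edit in $\Sigma$ produces at most $2r+1$ edits in the $\Psi$-image, giving $\hat d(\tilde z,\tilde w)\lesssim(2r+1)g(n+2r)$. The propagation count is correct (a substitution perturbs at most $2r+1$ image symbols; an insertion or deletion decomposes as one insertion/deletion plus at most $2r$ substitutions at the seam). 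However, there is a wrinkle you should address: the intermediate words $z^i$ need not lie in $\LLL(\Sigma)$, while $\psi$ and hence $\Psi$ are defined only on $\LLL_{2r+1}(\Sigma)$ and $\LLL(\Sigma)$; to make the triangle-inequality argument literal you would extend $\psi$ arbitrarily to all of $A^{2r+1}$, which does not change $\Psi(z)$ or $\Psi(w)$. The paper sidesteps this by never tracking an edit path: from $\hat d(z,w)\le g(n+2r)$ it extracts $K\ge n-\big((2r+1)g(n+2r)+2r\big)$ pairs of aligned $(2r+1)$-windows (two increasing index sequences $m_1<\cdots<m_K$, $n_1<\cdots<n_K$ with $z_{m_i-r}\cdots z_{m_i+r}=w_{n_i-r}\cdots w_{n_i+r}$), each of which forces $\tilde z_{m_i}=\tilde w_{n_i}$, and then bounds $\hat d(\tilde z,\tilde w)\le(n-K)+(|w|-K)\le\tilde g(n)$ with $\tilde g(n)=(4r+3)g(n+2r)+4r$. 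Both routes yield a valid mistake function; the paper's avoids the need to discuss $\Psi$ on words outside $\LLL(\Sigma)$. One further expectation mismatch: the paper proves only part (2) in full and cites the proof of Proposition 2.2 in \cite{CT} for parts (1) and (3), whereas your outline treats part (1) as the one to present in detail.
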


Furthermore, if $\CCC^p \GGG \CCC^s$ is a decomposition for $\LLL(\Sigma)$, there is a natural decomposition for $\LLL(X)$. We define $\tilde \CCC^p$ by taking $\Psi(\CCC^p \LLL_{2k}(\Sigma))$, and  $\tilde \CCC^s$ by taking $\Psi(\LLL_{2k}(\Sigma) \CCC^s)$. It is easy to check the following lemma.
\begin{lemma} \label{factordecomp}
If $\CCC^p \GGG \CCC^s$ is a decomposition for $\LLL(\Sigma)$, then $\tilde \CCC^p \tilde \GGG \tilde \CCC^s$ is a decomposition for $\LLL(X)$. If 
$h(\CCC^p \cup \CCC^s) =0$, then $h(\tilde \CCC^p \cup \tilde \CCC^s) =0$.
\end{lemma}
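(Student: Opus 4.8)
## Proof plan for Lemma \ref{factordecomp}

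The plan is to verify the two claims directly from the definitions. For the first claim, I would take an arbitrary word $\bar w \in \LLL(X)$ and show it admits a decomposition $\bar w = \bar u^p \bar v \bar u^s$ with $\bar u^p \in \tilde\CCC^p$, $\bar v \in \tilde\GGG$, $\bar u^s \in \tilde\CCC^s$. Since $\pi$ (hence $\Psi$) is surjective, lift $\bar w$ to a word $W \in \LLL(\Sigma)$ with $\Psi(W) = \bar w$ and $|W| = |\bar w| + 2r$. Actually, to have the flexibility to absorb the $2k$ boundary blocks needed in the definitions of $\tilde\CCC^p$ and $\tilde\CCC^s$, I would instead lift to a slightly longer word: pick $W \in \LLL(\Sigma)$ of length $|\bar w| + 2r + 4k$ whose image under $\Psi$ is a word of the form $s \,\bar w\, t$ with $|s| = |t| = 2k$ (possible by first padding $\bar w$ on both sides inside $\LLL(X)$, which is legitimate since every word in a subshift extends, then lifting). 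Apply the $\Sigma$-decomposition: $W = u^p v u^s$ with $u^p \in \CCC^p$, $v \in \GGG$, $u^s \in \CCC^s$. Now the subtle bookkeeping is to realign the block-code windows: $\Psi$ eats $2r$ symbols, and a subword $W' $ of $W$ of length $\ell$ maps, under the appropriate restriction of $\Psi$, to a word of length $\ell - 2r$ in $\LLL(X)$, provided $\ell > 2r$. I would split $W$ at positions near the $\CCC^p/\GGG$ and $\GGG/\CCC^s$ boundaries so that the middle piece contains all of $\bar w$ after applying $\Psi$, and the outer pieces have length at least $2k + 2r$, so that they land in $\Psi(\CCC^p \LLL_{2k})$ and $\Psi(\LLL_{2k}\CCC^s)$ respectively (using that $\CCC^p$, $\GGG$, $\CCC^s$ are closed under taking appropriate subwords only in the weak sense required — more carefully, one uses that $u^p w'' \in \CCC^p \LLL_{|w''|}$ whenever $w''$ is a word extending $u^p$ inside $W$, which holds because $u^p \LLL_* $ just records the prefix). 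The middle piece is then of the form $u^{p\prime} v u^{s\prime}$ with $u^{p\prime}, u^{s\prime}$ short, so $\Psi$ of it is the concatenation of an element of $\Psi(\CCC^p\LLL_*)$-type prefix, a $\tilde\GGG$ word, and a $\Psi(\LLL_*\CCC^s)$-type suffix — but since the prefix/suffix lengths are now bounded we can reabsorb them, and after trimming the $2k$ padding blocks $s,t$ we obtain the claimed $\bar w = \bar u^p \bar v \bar u^s$.

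For the second claim, suppose $h(\CCC^p \cup \CCC^s) = 0$; I want $h(\tilde\CCC^p \cup \tilde\CCC^s) = 0$. Here the key point is that $\#(\tilde\CCC^p)_n \le \#\Psi\big((\CCC^p)_{m}\big)$ summed over the relevant range of $m$, but more simply: every word in $(\tilde\CCC^p)_n$ is $\Psi$ of some word of length $n + 2r$ in $\CCC^p \LLL_{2k}(\Sigma)$, and $\#\big(\CCC^p \LLL_{2k}\big)_{n+2r} \le \#(\CCC^p)_{n+2r-2k} \cdot \#\LLL_{2k} \le \#A^{2k}\cdot \#(\CCC^p)_{n+2r-2k}$ (the number of ways to choose the $\CCC^p$ prefix times the number of ways to choose the $2k$-symbol suffix). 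Since $\Psi$ is at most $\#A^{\,\text{something}}$-to-one on each length — actually $\Psi$ is a function, so it is surjective and in particular $\#\Psi(D) \le \#D$ for any $D$ — we get $\#(\tilde\CCC^p)_n \le \#A^{2k}\cdot\#(\CCC^p)_{n+2r-2k}$. Taking $\frac1n\log$ and $\varlimsup$, the constant $\#A^{2k}$ and the bounded index shift $2r - 2k$ wash out, giving $h(\tilde\CCC^p) \le h(\CCC^p) = 0$. The identical argument gives $h(\tilde\CCC^s) = 0$, and since $h(\tilde\CCC^p \cup \tilde\CCC^s) = \max\{h(\tilde\CCC^p), h(\tilde\CCC^s)\} = 0$, we are done.

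I expect the main obstacle to be the index/window bookkeeping in the first claim: making precise how a decomposition $W = u^p v u^s$ of a lift descends through the sliding block code $\Psi$ to a decomposition of $\bar w$, since $\Psi$ does not act "letter-by-letter" and the lengths shift by $2r$, while simultaneously ensuring the outer pieces are long enough to legitimately lie in $\tilde\CCC^p = \Psi(\CCC^p\LLL_{2k})$ and $\tilde\CCC^s = \Psi(\LLL_{2k}\CCC^s)$. This is exactly the sort of routine-but-fiddly alignment argument that the phrase "it is easy to check" in the statement is meant to cover, and once the definitions of $\tilde\CCC^p,\tilde\CCC^s$ are unwound with the $2k$-block padding built in, the verification goes through; the entropy computation in the second claim is then genuinely straightforward.
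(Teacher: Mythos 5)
Your entropy argument is correct and complete: $\#(\tilde\CCC^p)_n \le (\#A)^{2k}\cdot\#(\CCC^p)_{n+2r-2k}$ because $\Psi$ is a map and every word of $(\tilde\CCC^p)_n$ has a preimage in $(\CCC^p\LLL_{2k})_{n+2r}$, and the multiplicative constant and bounded index shift wash out of $\varlimsup\frac1n\log$; together with $h(\DDD_1\cup\DDD_2)=\max(h(\DDD_1),h(\DDD_2))$ this gives the second claim. (The paper gives no proof of this lemma --- it only says ``it is easy to check'' --- so there is no written argument to compare against.)

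For the first claim, the skeleton (lift to $\Sigma$, decompose there, push through $\Psi$) is right, but the padding to $s\bar wt$ does not buy what you want and is actually a liability. After lifting $s\bar wt$ to $W$ of length $n+4k+2r$ and decomposing $W=u^pvu^s$, you have no control over where the $\CCC^p/\GGG$ and $\GGG/\CCC^s$ boundaries of $W$ land relative to your padding blocks: for instance all of $\bar w$ may lie strictly inside the $\Psi$-image of $v$, and since $\tilde\GGG=\Psi(\GGG)$ is not closed under passing to subwords, the ``trim the $2k$ padding blocks'' step is exactly where the argument must work and you have not supplied it. The direct route closes cleanly without any padding: lift $\bar w\in\LLL(X)_n$ to $W\in\LLL(\Sigma)_{n+2r}$ with $\Psi(W)=\bar w$, decompose $W=u^pvu^s$ with $a=|u^p|$, $b=|v|$, $c=|u^s|$, and observe that when $b\ge 2r$ and $a,c\le n$ one has
\begin{align*}
\bar w_1\cdots\bar w_a&=\Psi\bigl(u^p\cdot v_1\cdots v_{2r}\bigr)\in\Psi(\CCC^p\LLL_{2r}),\\
\bar w_{a+1}\cdots\bar w_{a+b-2r}&=\Psi(v)\in\Psi(\GGG),\\
\bar w_{a+b-2r+1}\cdots\bar w_n&=\Psi\bigl(v_{b-2r+1}\cdots v_b\cdot u^s\bigr)\in\Psi(\LLL_{2r}\CCC^s),
\end{align*}
and these three pieces concatenate to $\bar w$. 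The residual content is the degenerate cases ($b<2r$, or $a>n$, or $c>n$), where one takes $\bar v=\emptyset$ and, if needed, one of the outer pieces to be all of $\bar w$; there the block tacked onto $u^p$ (resp.\ $u^s$) before applying $\Psi$ has length strictly less than $2r$, so the definitions must be read as $\tilde\CCC^p=\Psi(\CCC^p\LLL_{\le 2r})$ and $\tilde\CCC^s=\Psi(\LLL_{\le 2r}\CCC^s)$ with $k=r$ (the paper's $\LLL_{2k}$ with exact length, and the unexplained $k$, are loose on this point). That handling of the short-middle and long-end cases is the actual fiddly content behind ``easy to check''; your plan flags it but neither the padding nor the vague ``reabsorb'' step actually supplies it.
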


\section{Consequences of Theorem \ref{thm:main}}\label{sec:consequences}

\subsection{Unique equilibrium states}

The following result from ~\cite{CT,CT2} provides unique equilibrium states which satisfy the weak Gibbs property~\ref{A.Gibbs}, and is our primary tool for finding reference measures to which Theorem~\ref{thm:main} applies. Roughly speaking, Conditions \ref{I}--\ref{II} state that $\CCC^p$ and $\CCC^s$ contain all obstructions to specification (for the system) and regularity (for the potential), while Condition \ref{III} states that these obstructions carry smaller pressure than the whole system.

\begin{theorem}[\cite{CT2}, Theorem C and Remark 2.2] \label{thm:CT}
Let $(X,\sigma)$ be a subshift on a finite alphabet and $\ph\in C(X)$ a potential.  Suppose there exist collections of words $\CCC^p,\GGG,\CCC^s\subset \LLL$ such that $\CCC^p\GGG\CCC^s = \LLL$ and the following conditions hold:
\begin{enumerate}[label=\textbf{(\Roman{*})}]
\item\label{I} $\GGG^M$ has (W)-specification for every $M\in \NN$; \label{item1}
\item\label{II} $\ph$ has the Bowen property on $\GGG$;
\item\label{III} $P(\CCC^p\cup\CCC^s,\ph) < P(\ph)$.
\end{enumerate}
Then $\ph$ has a unique equilibrium state $m_\ph$, and $m_\ph$ is Gibbs for $\ph$ with respect to $\GGG$. In particular, $m_\ph$ satisfies~\ref{A.Gibbs}.
\end{theorem}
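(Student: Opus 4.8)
The plan is to carry out the ``non-uniform specification'' construction of unique equilibrium states: hypotheses \ref{I}--\ref{III} are precisely what this construction needs, and the argument splits into (a) pressure estimates showing that the good words $\GGG$ (and the filtration $\GGG^M$) carry the full pressure $P(\ph)$, (b) construction of a candidate equilibrium state from the words in $\GGG$ together with verification of the Gibbs bounds of Definition~\ref{def:Gibbs}, and (c) a uniqueness argument forcing every equilibrium state to equal the constructed measure.

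First I would prove the pressure estimates. Write a typical $w\in\LLL_n$ as $w=u^pvu^s$ with $u^p\in\CCC^p$, $v\in\GGG$, $u^s\in\CCC^s$, and split the sum defining $\Lambda_n(\LLL,\ph)$ according to the length $\ell=|u^p|+|u^s|$ of the ``collar''. Terms with $\ell\ge\varepsilon n$ are dominated by partition sums of $\CCC^p\cup\CCC^s$, which by \ref{III} grow at exponential rate strictly below $P(\ph)$, so they are exponentially negligible once $\varepsilon$ is small. The remaining terms have collar $\le\varepsilon n$, hence lie in $\GGG^M$ as soon as $M\ge\varepsilon n$; deleting the collar alters $S_n\ph$ by at most $2\varepsilon n\|\ph\|_\infty$ and inflates the word count by at most $(\# A)^{2\varepsilon n}$, so these terms are controlled by $\Lambda_{n'}(\GGG,\ph)$ for $n'$ close to $n$. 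Sending $\varepsilon\to 0$ gives $P(\GGG,\ph)=P(\ph)$, and combining this with the almost-supermultiplicativity $\Lambda_{n+\tau+m}(\GGG^M,\ph)\ge c_0\Lambda_n(\GGG^M,\ph)\Lambda_m(\GGG^M,\ph)$ supplied by \ref{I} and \ref{II} upgrades the estimate, for one fixed large $M$, to a two-sided bound $\Lambda_n(\GGG^M,\ph)\asymp e^{nP(\ph)}$, alongside the uniform bound $\Lambda_n(\LLL,\ph)\le De^{nP(\ph)}$.

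Next I would construct the measure. Set $\nu_N=\Lambda_N(\GGG^M,\ph)^{-1}\sum_{w\in\GGG^M_N}e^{S_N\ph(x_w)}\delta_{x_w}$ for chosen points $x_w\in[w]$ (working with $\GGG^M$ for the fixed large $M$ costs nothing, since $\GGG\subseteq\GGG^M$, so a lower Gibbs bound on $\GGG^M$ entails one on $\GGG$), and let $m$ be any weak* limit point of the averages $\frac1N\sum_{k=0}^{N-1}\sigma^k_*\nu_N$; this limit is $\sigma$-invariant. The upper Gibbs bound \eqref{eqn:upperGibbs} follows from the pressure estimates, since a length-$n$ cylinder $[w]$ picks up from $\nu_N$ (for $N$ large) mass at most $e^{\sup_{[w]}S_n\ph}\Lambda_{N-n}(\LLL,\ph)/\Lambda_N(\GGG^M,\ph)\le (D/c)e^{\sup_{[w]}S_n\ph-nP(\ph)}$, and cylinders are clopen so this survives the limit. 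The lower Gibbs bound \eqref{eqn:lowerGibbs} uses \ref{I}: gluing arbitrary words of $\GGG^M_N$ onto both sides of a given $w\in\GGG^M$ with connecting words of length $\le\tau$, and using \ref{II} to keep the Birkhoff sums over the central occurrence of $w$ under control, shows that $[w]$ receives at least a fixed (if $M$-dependent) multiple of $e^{S_{|w|}\ph(x_w)-|w|P(\ph)}$. The entropy density of $\GGG$ (from the pressure estimate plus a counting argument) then yields $h(m)+\int\ph\,dm=P(\ph)$, so $m$ is an equilibrium state.

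The main obstacle is uniqueness. Let $\mu$ be an arbitrary ergodic equilibrium state. The crux is to show $\mu$ gives no mass to the ``obstruction set'': that for $\mu$-a.e.\ $x$, the prefix $x_1\cdots x_n$ lies in $\GGG^M$ for a density-one set of $n$, with $M$ fixed. This is exactly where \ref{III} is needed --- a prefix with a long collar is governed by the partition sums of $\CCC^p\cup\CCC^s$, so if $\mu$ charged such prefixes with positive frequency, a Shannon--McMillan--Breiman / entropy estimate would bound $h(\mu)+\int\ph\,d\mu$ above by a nontrivial convex combination of $P(\CCC^p\cup\CCC^s,\ph)$ and $P(\ph)$, hence strictly below $P(\ph)$ by \ref{III}, contradicting that $\mu$ is an equilibrium state. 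Granting this, one combines the two-sided Gibbs bounds for $m$ on the sets $\GGG^M$ with the Shannon--McMillan--Breiman and Birkhoff theorems for $\mu$ and the equality $h(\mu)+\int\ph\,d\mu=P(\ph)$ to see that, along a density-one set of $n$, $\frac1n\log\bigl(\mu[x_1\cdots x_n]/m[x_1\cdots x_n]\bigr)\to0$; a martingale/Radon--Nikodym argument on the cylinder $\sigma$-algebras (in the spirit of Bowen's proof of uniqueness from specification and the Gibbs property) then gives that $\mu$ is equivalent to $m$, and ergodicity of $m$ forces $\mu=m$. I expect the delicate points to be the bookkeeping in the collar-splitting pressure estimate --- making the polynomial and $e^{o(n)}$ factors wash out cleanly --- and, above all, the rigorous argument that an equilibrium state cannot charge the obstruction set, which is subtle precisely because $\GGG^M$ is not shift-invariant and the decomposition $w=u^pvu^s$ is not canonical.
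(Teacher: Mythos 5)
Your sketch reconstructs, at a high level, the strategy of the equilibrium-state construction from \cite{CT2}: pressure estimates showing that $\GGG^M$ carries full pressure, construction of a candidate Gibbs measure as a weak* limit of weighted empirical measures, and a uniqueness argument via an obstruction estimate that invokes \ref{III}. But this paper does not prove Theorem~\ref{thm:CT} from first principles. It is quoted from \cite{CT2} (Theorem~C there), where it is proved under the \emph{stronger} hypothesis that each $\GGG^M$ has (S)-specification; what the present paper actually supplies, in Appendix~\ref{app:StoW}, is a proof that the \cite{CT2} argument survives when (S)-specification is relaxed to (W)-specification, by identifying the three places where the stronger hypothesis was used (Lemma~5.1, Proposition~5.5, and Lemma~5.9 of \cite{CT2}) and re-deriving them.

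The step you compress into ``the almost-supermultiplicativity $\Lambda_{n+\tau+m}(\GGG^M,\ph)\ge c_0\Lambda_n(\GGG^M,\ph)\Lambda_m(\GGG^M,\ph)$ supplied by \ref{I} and \ref{II}'' is exactly where the weakening bites, and the formula as written does not follow from (W)-specification. Under (S)-specification the connecting word always has length exactly $\tau$, so the gluing map on $(\GGG^M)_n\times(\GGG^M)_m$ is an injection into $\LLL_{n+m+\tau}$ (note also that its image lies in $\LLL$, not in $\GGG^M$: your left-hand side should be $\Lambda_{n+m+\tau}(\LLL,\ph)$). Under (W)-specification the connecting word has length anywhere in $\{0,\dots,\tau\}$, so the glued words are scattered across $\bigcup_{t=0}^{\tau}\LLL_{n+m+t}$ and there is no single partition sum to compare against, nor is injectivity of a naive gluing map onto a fixed length guaranteed. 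The paper's device is the \emph{truncated} gluing map $\Phit$ (truncate the glued word to its first $\sum_i n_i$ symbols) together with Lemma~\ref{lem:preimages-general}, which bounds $\#\Phit^{-1}(v)\le(p^\tau(\tau+1))^k$; this multiplicity bound feeds into the re-proved \cite[Lemma~5.1]{CT2} (giving $\Lambda_n(\GGG^M,\ph)\le D_M e^{nP(\ph)}$), into \cite[Proposition~5.5]{CT2} (the lower Gibbs bound, where one must additionally select a fixed gap-length profile $\vec\tau$ carrying at least a $(t_M+1)^{-2}$ fraction of the weight), and into \cite[Lemma~5.9]{CT2} (the mixing estimate for ergodicity, where the paper settles for a $\varlimsup$ in place of the $\varliminf$ available under (S)-specification). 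So your outline is a reasonable summary of \cite{CT2}, but it elides precisely the part of the argument that the present paper is responsible for, and the claimed supermultiplicativity inequality is a genuine gap as stated.
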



We also note that Theorem B of \cite{CT4} is a non-symbolic version of this result whose hypotheses are weaker than those of Theorem \ref{thm:CT}.

\begin{theorem}\label{cor:LDP-small-obstr}
Let $X$ be a subshift on a finite alphabet and $\ph\in C(X)$ a potential.  Suppose $\LLL$ has a decomposition $\LLL = \CCC^p\GGG\CCC^s$ satisfying  \ref{A.ASD} and \ref{I}--\ref{III}.  Then writing $m_\ph$ for the unique equilibrium state of $\ph$, the system $(X,\sigma)$ satisfies a level-2 large deviation principle with reference measure $m_\ph$ and rate function $q^\ph$ given by~\eqref{eqn:rate}.
\end{theorem}
\begin{proof}
Condition  \ref{I} implies \ref{A.spec}.  By Theorem~\ref{thm:CT} there is a unique equilibrium state $m_\ph$, and moreover $m_\ph$ satisfies \ref{A.Gibbs}.  Thus, Theorem \ref{thm:main} gives the result.
\end{proof}

For a shift space $X$ and a collection of words $\CCC\subset \LLL$, it is typically much easier to verify $h(\CCC) <h(X)$ than $P(\CCC,\ph)<P(X, \ph)$.  For $\beta$-shifts, 
it was shown in \cite[Proposition 3.1]{CT2} that $\ref{III}$ holds for every Bowen potential $\ph$, and we show in \S \ref{sec:apps} that this is also true for $S$-gap shifts.
However, for other shift spaces where no analogous argument is available yet, the following lemma is a convenient way to ensure that \ref{III} holds for a large class of functions.
\begin{lemma} \label{lem:BR}
Suppose $X$ is a shift space and $\CCC \subset \LLL(X)$ is a collection of words such that $h(\CCC) < h(X)$, and let $\ph\colon X \to \RR$. If $\ph$ satifies the bounded range condition
\begin{equation} \tag{BR} \label{BR}
\sup\varphi-\inf\varphi<h(X) - h(\CCC),
\end{equation}
then $P(\CCC,\ph) < P(X, \ph)$.
\end{lemma}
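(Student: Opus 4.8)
The plan is to exploit the defining formulas for pressure and entropy directly. Recall that
\[
P(\CCC,\ph) = \ulim_{n\to\infty} \frac 1n \log \Lambda_n(\CCC,\ph), \qquad
\Lambda_n(\CCC,\ph) = \sum_{w\in\CCC_n} e^{\sup_{x\in[w]} S_n\ph(x)}.
\]
The first step is to bound each summand crudely: for any $w \in \CCC_n$ and any $x \in [w]$ we have $S_n\ph(x) \le n\sup\ph$, so $\Lambda_n(\CCC,\ph) \le \#\CCC_n \cdot e^{n\sup\ph}$. Taking $\frac 1n\log$ and passing to the upper limit gives $P(\CCC,\ph) \le h(\CCC) + \sup\ph$.

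The second step is the matching lower bound for $P(X,\ph) = P(\LLL,\ph)$: since $\CCC \subset \LLL$, we have $\Lambda_n(\LLL,\ph) \ge \Lambda_n(\CCC_n,\ph)$ is not quite what we want — instead bound from below using all of $\LLL_n$ and the infimum of the potential. For any $w\in\LLL_n$ and $x\in[w]$, $S_n\ph(x)\ge n\inf\ph$, hence $\Lambda_n(\LLL,\ph)\ge \#\LLL_n\cdot e^{n\inf\ph}$, and therefore $P(X,\ph)\ge h(X) + \inf\ph$. (Here I use that $h(X) = h(\LLL)$ is realized as an honest limit, or at worst pass to a subsequence along which $\frac 1n\log\#\LLL_n$ converges to $h(X)$; since we only need a lower bound on a $\ulim$, a subsequence suffices.)

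Combining the two steps,
\[
P(X,\ph) - P(\CCC,\ph) \;\ge\; \bigl(h(X) + \inf\ph\bigr) - \bigl(h(\CCC) + \sup\ph\bigr) \;=\; \bigl(h(X) - h(\CCC)\bigr) - \bigl(\sup\ph - \inf\ph\bigr),
\]
which is strictly positive precisely under the hypothesis \eqref{BR}. This proves $P(\CCC,\ph) < P(X,\ph)$.

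There is no serious obstacle here: the estimate is a two-sided ``squeeze'' between $\sup\ph$ and $\inf\ph$, and the only minor point requiring care is the interplay between $\ulim$ and $\liminf$ in the two directions — we need an \emph{upper} bound on $P(\CCC,\ph)$ (so the crude upper estimate on every term is fine under $\ulim$) and a \emph{lower} bound on $P(X,\ph)$ (so we may freely pass to a convenient subsequence realizing $h(X)$). Both $\sup\ph$ and $\inf\ph$ are finite since $\ph\in C(X)$ and $X$ is compact, so all quantities are well-defined.
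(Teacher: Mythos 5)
Your proof is correct, and your upper bound on $P(\CCC,\ph)$ is exactly the one the paper uses: estimate $\sup_{x\in[w]}S_n\ph(x)\le n\sup\ph$ term by term to get $P(\CCC,\ph)\le h(\CCC)+\sup\ph$. Where you diverge is in the lower bound for $P(X,\ph)$. The paper invokes the variational principle: $P(X,\ph)=\sup_{\mu\in\Ms(X)}\bigl(h(\mu)+\int\ph\,d\mu\bigr)\ge h(X)+\inf\ph$, choosing $\mu$ close to maximal entropy. You instead argue purely combinatorially, bounding $\Lambda_n(\LLL,\ph)\ge \#\LLL_n\,e^{n\inf\ph}$ and taking $\ulim$. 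Both are valid; your route is more elementary in that it never leaves the definition of pressure via partition sums, while the paper's is a one-liner given the variational principle (which is certainly available in this setting). Your side remark about passing to a subsequence is actually unnecessary: $\ulim$ is monotone, so the pointwise inequality $\frac1n\log\Lambda_n(\LLL,\ph)\ge\frac1n\log\#\LLL_n+\inf\ph$ passes directly to the $\ulim$; and in any case for a shift space $\frac1n\log\#\LLL_n$ converges by subadditivity, so $h(X)$ is an honest limit as you note. The final chain $P(X,\ph)\ge h(X)+\inf\ph>h(\CCC)+\sup\ph\ge P(\CCC,\ph)$, with strictness coming from \eqref{BR}, is exactly what is needed.
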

Often we can take $h(\CCC)=0$, in which case the condition \eqref{BR} on $\ph$ reduces to the condition
\begin{equation} \tag{BR$_0$} \label{BR0}
\sup\varphi-\inf\varphi<h(X).
\end{equation}

\subsection{Factors}

Our results are well behaved under the operation of passing to a subshift factor.
\begin{theorem} \label{factors}
Let $\Sigma$ be a subshift on a finite alphabet, and suppose that $\LLL = \LLL(\Sigma)$ has a decomposition $\LLL = \CCC^p\GGG\CCC^s$ satisfying \ref{A.ASD} and \ref{I}. Assume further that $h(\CCC) =0$, where $\CCC = \CCC^p \cup \CCC^s$. Let $X$ be a subshift factor of $\Sigma$ and $\ph \colon X \to\RR$ be a continuous function satisfying \eqref{BR0} and the Bowen property.
Then $\ph$ has a unique equilibrium state $m_\ph$, and the system $(X,\sigma)$ satisfies a level-2 large deviation principle with reference measure $m_\ph$ and rate function $q^\ph$ given by~\eqref{eqn:rate}.
\end{theorem}
\begin{proof}
By Lemmas \ref{Gunderfactors} and \ref{factordecomp}, the language $\tilde\LLL$ of $X$ has a decomposition $\tilde\CCC^p\tilde\GGG\tilde\CCC^s$ such that $\tilde\GGG$ satisfies \ref{I} and \ref{A.ASD}, and $h(\tilde\CCC^p \cup \tilde \CCC^s) = 0$.  The Bowen property for $\ph$ implies \ref{II}, and Lemma \ref{lem:BR} gives \ref{III}.  Thus Theorem \ref{cor:LDP-small-obstr} gives the result.
\end{proof}

%
%
%
%
%
%
%
%
%
%

\subsection{$\beta$-shifts and $S$-gap shifts}

Our main examples are the $\beta$-shifts, the $S$-gap shifts, and their subshift factors.   For all of these examples we can take $h(\CCC)=0$, and for the $\beta$-shifts (in \cite[Proposition 3.1]{CT2}) and $S$-gap shifts (in \S \ref{sec:Sgap}), we can show that every Bowen potential satisfies
\begin{equation}\label{eqn:hyperbolic}
P(\ph) > \sup_{\mu\in \MMM_\sigma(X)} \int\ph\,d\mu,
\end{equation}
which in turn implies $P(\CCC,\ph)<P(\ph)$ and removes the need for the bounded range condition. For factors of $\beta$-shifts and $S$-gap shifts, we do require the additional assumption \eqref{BR0} on the potential $\ph$ at present. 

\subsubsection{$S$-gap shifts} 

An $S$-gap shift $\Sigma_S$ is a subshift of $\{0,1\}^{\mathbb{Z}}$ defined by the rule
that for a fixed $S\subset\{0,1,2,\cdots\}$, the number of $0$'s between consecutive
$1$'s is an integer in $S$. That is, the language of $\Sigma_S$ is
\[
\{0^n10^{n_1}10^{n_2}1\cdots 10^{n_k}10^m \mid n_i\in S\text{ for all }1\le i\le k\text{ and }n,
m\in\mathbb{N}\},
\]
together with $\{0^n\mid n\in\mathbb{N}\}$, where we assume that $S$ is infinite (when S is finite, $\Sigma_S$ is sofic and can be analysed without the techniques of this paper).   The language for $\Sigma_S$ admits the following decomposition:
\begin{align*}
\mathcal{G}
&=
\{0^{n_1}10^{n_2}10^{n_3}1\cdots 10^{n_k}1\mid n_i\in S \text{ for all }1\le i\le k\},\\
\mathcal{C}^p
&=
\{0^n1\mid n\not\in S\},\\
\mathcal{C}^s
&=
\{0^n\mid n\in\mathbb{N}\},
\end{align*}
which was first studied in \cite{CT}.  We verify in \S \ref{sec:Sgap} that this decomposition satisfies Conditions \ref{A.ASD} and \ref{I}--\ref{III} for every Bowen potential $\ph$.

\subsubsection{$\beta$-shifts}
Fix $\beta>1$, write $b=\lceil\beta \rceil$, and let $\omega^{\beta}\in
\{0,1,\cdots,b-1\}^{\mathbb{N}}$ be the greedy $\beta$-expansion of $1$.
Then $\omega^{\beta}$ satisfies
$
\sum_{j=1}^{\infty}\omega_j^{\beta}\beta^{-j}=1,
$
and has the property that $\sigma^j(\omega^{\beta})\preceq\omega^{\beta}$
for all $j\ge 1$, where $\preceq$ denotes the lexicographic ordering.
The $\beta$-shift is defined by
\[
\Sigma_{\beta}=\left\{x\in\{0,1,\cdots,b-1\}^{\mathbb{N}}\mid \sigma^j(x)\preceq\omega^{\beta}
\text{ for all }j\ge 1 \right\}.
\]
The first and second author showed in \cite{CT,CT2} that the language for $\Sigma_{\beta}$
admits a decomposition $\mathcal{L}(\Sigma_{\beta})=\mathcal{G}\mathcal{C}^s$ that satisfies \ref{I}--\ref{III} for every Bowen potential $\ph$.  In \S \ref{sec:beta} we briefly review the construction and show that condition \ref{A.ASD} is also satisfied.




\subsubsection{Results for examples}
We collect our results as applied to these examples in the following theorem. 
We say that a subshift is non-trivial if it is not a single periodic orbit. We proved in \cite[Proposition 2.4]{CT} that a non-trivial subshift factor of a $\beta$-shift or $S$-gap shift has positive entropy. 

\begin{theorem} \label{examples} 
Let $X$ and $\ph$ be one of the following:
\begin{enumerate}
\item $X$ is a $\beta$-shift or an $S$-gap shift, and $\ph$ has the Bowen property;
\item $X$ is a non-trivial subshift factor of a $\beta$-shift or an $S$-gap shift, and $\ph$ satisfies \eqref{BR0} and the Bowen property.
\end{enumerate}
Then $\ph$ has a unique equilibrium state $m_\ph$, and $(X,\sigma)$ satisfies a level 2 large deviation principle with reference measure $m_\ph$ and rate function $q^\ph\colon \MMM(X)\to [-\infty,0]$ given by \eqref{eqn:rate}.
\end{theorem}
\begin{proof}
The case when $X$ is an $S$-gap shift is proved in \S\ref{sec:Sgap}.  The case when $X$ is a $\beta$-shift is proved in \S\ref{sec:beta}.  The result for factors of $\beta$-shifts and $S$-gap shifts follows from Theorem \ref{factors}.
\end{proof}

To the best of our knowledge, the above statement was only previously known in the case when $X$ is a $\beta$-shift and $m_0$ is the measure of maximal entropy \cite{PfS} (apart from the exceptional set of special cases above where $X$ has specification, see \cite{BM, AJ}). 

\subsection{Horseshoe theorem: precise statement}

We now state a more precise version of our `horseshoe result', which is a key step in the proof of Theorem~\ref{thm:main}, and may be of independent interest.

\begin{proposition}\label{prop:ent-appr}
Let $X$ be a shift space and suppose that $\GGG\subset \LLL$ satisfies~\ref{A.spec} and~\ref{A.ASD}.
Then there exists an increasing sequence $\{X_n\}$ of compact $\sigma$-invariant subsets of $X$ with the following properties.
\begin{enumerate}
\item Each $X_n$ is a topologically transitive sofic shift.
\item\label{eqn:FtoG} There is $T\in \NN$ such that for every $n$ and every $w\in \LLL(X_n)$, there are $u,v\in \LLL$ with $|u|,|v|\leq n+T$ such that $uwv\in \GGG$.  
\item Every invariant measure on $X$ is entropy approachable by ergodic measures on $X_n$:
for any $\eta>0$, any $\mu\in \Ms(X)$, and any neighborhood $U$ of $\mu$ in $\Ms(X)$, there exist $n\ge 1$ and $\mu'\in\Mse(X_n)\cap U$ such that $h(\mu')>h(\mu)-\eta$ holds.  
\end{enumerate}
\end{proposition}
By the variational principle and the entropy approachability in Proposition \ref{prop:ent-appr}, we have the further result that $\lim_{n \to \infty} h(X_n) = h(X)$, and more generally
\[
P(X, \ph) = \lim_{n\to\infty} P(X_n, \ph) = \sup_{n\in\NN} P(X_n, \ph)
\]
for every $\ph\in C(X)$.  Thus, we can interpret the sets $X_n$ as well behaved `horseshoes' which can be used to approximate the original space $X$, revealing a structure reminiscent of Katok horseshoes \cite{K}. 

In the proof of the main results, we will use the following consequence of the second property in Proposition \ref{prop:ent-appr}:  If a measure $m\in \MMM(X)$ is Gibbs with respect to $\GGG$, then $m$ has the following Gibbs property on the family of subshifts $\{X_n\}$: 
there exist constants $K_n, K' >0$ such that for every $x\in X_n$ and $k \in \NN$, we have
\begin{equation}\label{eqn:XnGibbs}
K_n \leq \frac{m[x_1\cdots x_k]}{e^{-kP(\ph) + S_k\ph(x)}} \leq K'.
\end{equation}
This follows from the fact that $x_1\cdots x_k$ can be extended to a word in $\GGG$ 
by adding a word to each end whose length is bounded by a constant depending only on $n$. 

\section{Proof of Theorems~\ref{thm:main} and \ref{prop:ent-appr0}}\label{sec:pfs}

The large deviations property in Definition~\ref{def:LDP} comprises an upper bound and a lower bound.  We establish the upper bound first, 
 by verifying criteria given by Pfister and Sullivan in \cite{PfS}.


\subsection{Upper bound} \label{sec:ub}

Given $\mu\in \Ms(X)$, let $q^\ph(\mu) = h(\mu) + \int\ph\,d\mu - P(\ph)$, as in~\eqref{eqn:rate}.  We show that for any closed set $F\subset \MMM(X)$, we have
\begin{equation}\label{eqn:upper-bd}
\varlimsup_{n\rightarrow\infty}\frac{1}{n}\log m(\EEE_n^{-1}(F))
\leq \sup_{\mu\in F\cap\mathcal{M}_{\sigma}(X)} q^\ph(\mu).
\end{equation}

Our key tool is the following result of Pfister and Sullivan. 
\begin{theorem}\label{thm:upper-bd}\cite[Theorem 3.2 and Proposition 4.2]{PfS}
Let $(X,\sigma)$ be a subshift, $m\in\mathcal{M}(X)$, $\psi\in C(X)$, and assume that the equation
\begin{equation}\label{eqn:uef}
\varlimsup_{n\rightarrow\infty}\sup_{w\in\mathcal{L}_n}\left(\frac{1}{n}
\log m([w])+\frac{1}{n}\sup_{x\in[w]} S_n\psi (w) \right)\le 0
\end{equation}
holds.
Then 
\[
\varlimsup_{n\rightarrow\infty}\frac{1}{n}\log m(\EEE_n^{-1}(F))
\le \sup_{\mu\in F\cap\mathcal{M}_{\sigma}(X)}\left(h(\mu)-\int\psi d\mu\right).
\]
\end{theorem}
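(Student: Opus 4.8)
Since Theorem~\ref{thm:upper-bd} is quoted verbatim from \cite{PfS}, it requires no proof here and is used as a black box; nonetheless it is worth recording both the shape of the underlying argument and, more to the point, how it will be applied in the sequel. The immediate use is to take $\psi = P(\ph) - \ph$, so that $S_n\psi(x) = nP(\ph) - S_n\ph(x)$: then hypothesis \eqref{eqn:uef} reads
\[
\limsup_{n\to\infty}\ \sup_{w\in\LLL_n}\Big(\tfrac1n\log m([w]) + P(\ph) - \tfrac1n\inf_{x\in[w]}S_n\ph(x)\Big)\le 0,
\]
which follows immediately from the upper Gibbs bound \eqref{eqn:upperGibbs} (up to the harmless term $\tfrac1n\log K'\to 0$); and since $h(\mu) - \int\psi\,d\mu = h(\mu) + \int\ph\,d\mu - P(\ph) = q^\ph(\mu)$, the conclusion of the theorem is precisely the upper bound \eqref{eqn:upper-bd}. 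Thus \ref{A.Gibbs} enters Theorem~\ref{thm:main} through this single application.

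To prove Theorem~\ref{thm:upper-bd} itself I would fix a closed, hence compact, $F\subset\MMM(X)$ and exploit two soft facts about empirical measures: $D(\EEE_n(x),\sigma_*\EEE_n(x)) = O(1/n)$, so empirical measures of long words are almost $\sigma$-invariant; and $\EEE_n$ is, up to an error tending to $0$ uniformly, a function of the length-$n$ word read off $x$. Covering $\EEE_n^{-1}(F)$ by the cylinders over $D_n := \{w\in\LLL_n : [w]\cap\EEE_n^{-1}(F)\neq\emptyset\}$, bounding $m([w])\le e^{n\epsilon_n}\exp(-\sup_{x\in[w]}S_n\psi(x))$ via \eqref{eqn:uef} with $\epsilon_n\to 0$, choosing $x^w\in[w]$ with $\EEE_n(x^w)\in F$, and using uniform continuity of $\psi$ (Cesàro averaging of the moduli of continuity of $\psi$ makes $\sup_{x\in[w]}S_n\psi(x) - S_n\psi(x^w) = o(n)$ uniformly in $w$), one obtains
\[
m(\EEE_n^{-1}(F)) \le e^{n\epsilon_n + o(n)}\sum_{w\in D_n}\exp\Big(-n\int\psi\,d\EEE_n(x^w)\Big).
\]
Covering $F$ by finitely many balls $B(\mu_i,\delta)$ with $\mu_i\in F$ and splitting $D_n$ accordingly, on each piece $\int\psi\,d\EEE_n(x^w) = \int\psi\,d\mu_i + o_\delta(1)$.

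The one nonroutine ingredient is the counting step: bounding the exponential growth rate of $\#\{w\in\LLL_n : [w]\cap\EEE_n^{-1}(B(\mu,\delta))\neq\emptyset\}$ by $\sup\{h(\nu) : \nu\in\Ms(X),\ D(\nu,\mu)\le 2\delta\}$ (with $\sup\emptyset = -\infty$) for small $\delta$; this is the ``topological'' half of \cite[Theorem~3.2]{PfS}, and its proof uses that subshifts are expansive --- so $\nu\mapsto h(\nu)$ is upper semicontinuous on $\Ms(X)$ --- together with the almost-invariance above and a standard separated-set/subadditivity argument, while the passage from it to the statement about the measure $m$ is the $\epsilon_n$-bookkeeping encapsulated in \cite[Proposition~4.2]{PfS}. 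Granting this, summing over the finitely many balls (which costs only $\tfrac1n\log N\to0$), taking $\tfrac1n\log$ and $\limsup_n$, and letting $\delta\to0$, upper semicontinuity of $\nu\mapsto h(\nu)-\int\psi\,d\nu$ and compactness of $\Ms(X)$ force the supremum over a shrinking neighbourhood of $F$ down to $\sup_{\nu\in F\cap\Ms(X)}(h(\nu)-\int\psi\,d\nu)$, which is the claim. (If $F\cap\Ms(X)=\emptyset$ then $F$ lies at positive distance from $\Ms(X)$, so almost-invariance gives $\EEE_n^{-1}(F)=\emptyset$ for large $n$ and the bound is the trivial $-\infty\le-\infty$.) I expect the counting step to be the sole genuine obstacle; everything else is bookkeeping.
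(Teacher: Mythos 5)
Your proposal is consistent with the paper. The paper does not reprove Theorem~\ref{thm:upper-bd}; it cites it verbatim from \cite{PfS} and immediately applies it with $\psi = P(\ph)-\ph$, exactly as you describe, so your first paragraph reproduces \S 4.1 of the paper essentially word for word. Your sketch of the underlying argument (cylinder covering, Ces\`aro smoothing of the modulus of continuity, the topological counting lemma keyed to upper semicontinuity of entropy via expansiveness, and the $\epsilon_n$-bookkeeping to pass to the measure $m$) cannot be checked against the paper since the paper supplies no proof, but it matches the documented structure of \cite{PfS} (Theorem~3.2 is the topological half, Proposition~4.2 the measure-theoretic conversion) and contains no gaps that I can see.

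One small inaccuracy worth flagging: the remark that ``\ref{A.Gibbs} enters Theorem~\ref{thm:main} through this single application'' overstates the case. Only the \emph{upper} Gibbs bound \eqref{eqn:upperGibbs} is used here; the \emph{lower} Gibbs bound \eqref{eqn:lowerGibbs}, which is the other half of \ref{A.Gibbs}, is crucial in the lower-bound argument (\S 4.3), where it is upgraded to the horseshoe Gibbs property \eqref{eqn:XnGibbs} and applied to words in $\LLL^{\nu,\zeta}\cap\LLL(X_k)$. So \ref{A.Gibbs} enters Theorem~\ref{thm:main} twice, once in each direction of the LDP.
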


We will apply Theorem~\ref{thm:upper-bd} with $\psi=P(\ph) - \ph$.
The upper Gibbs bound in~\ref{A.Gibbs} (see \eqref{eqn:upperGibbs}) yields a constant $K'$ such that 
\[
m([w]) \leq K'e^{-nP(\varphi)+ S_n\varphi(x)}
\]
for every $x\in[w]$.  Thus
\[
\frac 1n\log m([w]) + \frac 1n \sup_{x\in [w]} S_n(P(\ph) - \ph)(x) \leq \frac 1n\log(K')\to 0,
\]
for every $x\in[w]$. This establishes~\eqref{eqn:uef}, so Theorem~\ref{thm:upper-bd} provides the desired upper bound.

\subsection{Free concatenation and the gluing map} \label{free:concat}

The following result is \cite[Proposition 3.7]{vC15}.
\begin{proposition}\label{prop:free-concat}
If $\GGG$ has (W)-specification (Definition \ref{def:spec}), then there are $r,s\in \GGG$ and $c\in \LLL$ such that writing 
\[
\BBB = \LLL r \cap s \LLL \cap \GGG 
= \{w\in \GGG \mid w_1\cdots w_{\abs{s}} = s \text{ and } w_{\abs{w}-\abs{r}+1} \cdots w_{\abs{w}} = r\}
\]
and $\FFF := c\BBB = \{cw \mid w\in \BBB\}$,
the collection $\FFF\subset \LLL$ has the free concatenation property.  Moreover, a measure $m$ has the Gibbs property for $\ph$ w.r.t.\ $\GGG$ if and only if it has the Gibbs property for $\ph$ w.r.t.\ $\FFF$.
\end{proposition}
\begin{proof}[Sketch of proof]
We outline the main ideas; details are in \cite[\S6.2]{vC15}.  The basic argument is inspired by Bertrand's proof that shifts with specification have a synchronizing word \cite{aB88}. Let $\tau$ be the gap length for the specification property for $\GGG$. Given $r,s\in \GGG$, let $C(r,s) = \{c\in \LLL \mid |c| \leq \tau \text{ and } rcs\in \GGG\}$ be the set of ``short words which connect $r$ to $s$''; this set is finite and non-empty.  If  $u,v\in \LLL$ are such that $r'=ur \in \GGG$ and $s'=sv \in \GGG$, then $C(r',s') \subset C(r,s)$.  If this inclusion is strict for some choice of $u,v$ then replace $r,s$ with $r',s'$.  Iterate this process; since each $C(r,s)$ is finite and non-empty it must terminate, and we obtain $r,s\in \GGG$ such that $C(r',s') = C(r,s)$ for every $r' = ur\in \GGG$ and $s'=sv\in \GGG$.  Pick any $c\in C(r,s)$, then it is not hard to see from the characterisation of $r,s$ that $\FFF$ has the free concatenation property.  Equivalence of the Gibbs properties follows since words in $\FFF$ can be extended to $\GGG$ with a bounded number of symbols, and vice versa.
\end{proof}
\begin{lemma}\label{lem:Fea}
$\LLL$ is edit approachable by $\FFF$.
\end{lemma}
\begin{proof}
Given $w\in \GGG$, by (W)-specification there are $u,v\in \LLL$ with $\abs{u},\abs{v}\leq \tau$ such that $suwvr\in \GGG$, hence $csuwvr\in \FFF$.  Thus every word in $\GGG$ can be turned into a word in $\FFF$ with at most $\abs{c}+\abs{s}+\abs{r}+2\tau$ edits.  Since $\LLL$ is edit approachable by $\GGG$, this suffices.
\end{proof}

Write $\FFF^*$ for the set of all finite sequences $(w^1,\dots,w^m)$ where each $w^i\in \FFF$.  Let $\Phi\colon \FFF^*\to \FFF$ be the concatenation map $(w^1,\dots,w^m) \mapsto w^1\cdots w^m$.  This extends to a map $\FFF^\NN\to \FFF$ in the natural way.  We note that for each $n_1,\dots, n_k\in \NN$, the restriction of $\Phi$ to $\prod_{i=1}^k \FFF_{n_i}$ is clearly injective.

\subsection{Proof of Proposition~\ref{prop:ent-appr}}
We prove Proposition~\ref{prop:ent-appr}, and thus Theorem ~\ref{prop:ent-appr0}. This is crucial for our large deviation lower bounds. 

\vspace{1ex}
\emph{\textbf{Step 0: Definition and basic properties of $X_n$.}}
First, we define the sequence of shift spaces $X_n$ which will meet our requirements. 
Let $\FFF_{\leq n} = \bigcup_{i=0}^n\FFF_i$,  
and consider the set of words
\begin{equation}\label{eqn:Phi}
 \Phi (\FFF_{\leq n}^*) := \bigcup_{m=1}^{\infty} \{ w^1 \cdots w^m \mid w^i \in  \FFF_{\leq n} \text{ for all } 1 \leq i \leq m \}.
\end{equation}
We can turn this set into the language of a shift space by including all subwords to obtain
\begin{equation}\label{eqn:LXn}
\LLL (X_n) := \{ \text{all subwords of elements of } \Phi (\FFF_{\leq n}^*)\}.
\end{equation}
Then, $X_n$ is defined as the shift whose language is $\LLL(X_n)$. 
That $X_n$ is well defined is verified trivially using \cite[Proposition 1.3.4]{LM}.

\begin{lemma} \label{Xn}
The shift space $X_n$ has the following properties.
\begin{enumerate}
\item For every $w\in \LLL(X_n)$, there are $u,v\in \LLL_{\leq n}$ such that 
$uwv\in\FFF$.
\item $X_n$ is a sofic shift and has (W)-specification with gap size $2n$.
\end{enumerate}
\end{lemma}
\begin{proof}
To check the first property claimed for $X_n$, we observe that if $w\in\LLL(X_n)$ is a subword of $w^1\cdots w^m$, then by appending at most $n$ symbols to either end of $w$, we can obtain a word of the form $w^i\cdots w^j\in \FFF$. The (W)-specification property for $X_n$ follows immediately since words in $\FFF$ can be freely concatenated. To see that $X_n$ is sofic, we note that it can be presented by a loop graph, with each (of the finitely many) loops corresponding to a word in $\FFF_{\leq n}^*$. 
\end{proof}

To get the property of $X_n$ claimed in Proposition \ref{prop:ent-appr}\eqref{eqn:FtoG}, we observe that fixing $u\in \GGG$ and putting $T=\abs{u}$, any word in $\FFF$ can be extended to a word in $\GGG$ by adding $u$ to its beginning.

\begin{remark}
If every word in $\LLL$ can be extended to a word in $\GGG$, then it is easy to show that $X = \overline{\bigcup X_n}$.
\end{remark}



The rest of the proof of Proposition~\ref{prop:ent-appr} is an extension of the approach used by Pfister and Sullivan in \cite{PfS2}:
\begin{enumerate}
\item Construct a subshift $Y\subset X_n$ for some $n\ge 1$ such that every $\nu\in\MMM_\sigma(Y)$ is weak*-close to $\mu$.
\item Use edit approachability of $\LLL$ by $\FFF$ to explicitly build a subshift $H\subset Y$ with a rich structure.
\item Show that $H$ (and hence $Y$) has entropy close to $h(\mu)$ by using this structure.
\item Obtain the measures $\mu'$ as maximal entropy measures for $Y$.
\end{enumerate}

In preparation for the above steps, fix $\eta>0$
and use the ergodic decomposition of $\mu$ together with affinity of the entropy map to find $\lambda = \sum_{i=1}^p a_i \mu_i$ such that 
\begin{itemize}
\item the $\mu_i$ are ergodic;
\item the $a_i$ are rational numbers in $[0,1]$ such that $\sum_{i=1}^p a_i=1$;
\item $D(\mu,\lambda)\leq \eta$;
\item $h(\lambda) > h(\mu)-\eta$.
\end{itemize}
Let $h_i = 0$ when $h(\mu_i)=0$, and $\max(0,\, h(\mu_i) - \eta) < h_i < h(\mu_i)$ otherwise.

\begin{definition}
Given $\nu\in\MMM(X)$ and $\zeta>0$, let
\[
\LLL^{\nu,\zeta} := \{w\in\LLL \mid D(\EEE_{|w|}(x),\nu) < \zeta \text{ for all } x\in [w] \}.
\]
\end{definition}

Combining \cite[Propositions 2.1 and 4.1]{PfS}, we have the following.

\begin{lemma}\cite[Propositions 2.1 and 4.1]{PfS}\label{lem:nepssep}
There exists $N\in\NN$ such that for $n\geq N$ and $1\leq i\leq p$, we have $\#\LLL^{\mu_i,\eta}_n \geq e^{nh_i}$.
\end{lemma}

Because $\LLL$ is edit approachable by $\FFF$, there is a mistake function $g$ such that every $w\in \LLL$ has $v\in \FFF$ with $\hd(v,w)\leq g(|w|)$. By Lemma \ref{lem:stat-near}, we can choose $N$ large enough so that, in addition to the cardinality estimates in Lemma~\ref{lem:nepssep}, we have the following property.
\begin{itemize}
\item If $n\geq N$ and $x,y\in X$ are such that $\hat d(x_1\cdots x_n, y_1\cdots y_m) \leq  g(n)$, then $D(\EEE_n(x),\EEE_m(y)) \leq \eta$.
\end{itemize}
Without loss of generality, assume that $0<a_i<1$ for each $i$.  Choose $n$ such that we have $n_i := a_in \in \NN$, $n_i+g(n_i)\le n$, and $n_i\geq N$ for every $i$, and moreover
\begin{equation}\label{eqn:hn}
\frac{n}{n+\sum_{i=1}^pg(n_i)}(h(\lambda)-\eta)\ge h(\lambda)-2\eta.
\end{equation}
To prove the proposition, we will follow the steps listed above to show that there exists $\mu'\in\mathcal{M}_{\sigma}^e(X_n)$
such that $D(\mu,\mu')\le 6\eta$ and $h(\mu')>h(\mu)-4\eta$.

\vspace{1ex}
\emph{\textbf{Step 1: Definition of $Y\subset X_n$.}}
Fix $K\in \NN$ such that $4/K \leq \eta$.  Now let
\begin{equation}\label{eqn:Y}
Y := \{x\in X_n \mid x_t x_{t+1} \cdots x_{t+Kn-1} \in \LLL^{\mu,5\eta}_{Kn} \text{ for all }t\geq 0\}.
\end{equation}
Then $Y\subset X_n$ is compact and $\sigma$-invariant. Moreover, the following holds.
\begin{lemma}\label{ergodic}
We have $D(\mu,\nu)\le 6\eta$ for any $\nu\in\mathcal{M}_{\sigma}^e(Y)$.
\begin{proof}
Since $\nu$ is ergodic, there exists a generic point $x\in Y$, that is,
$\mathcal{E}_m(x)$ converges to $\nu$.
We choose $L$ so that $nK/L\le\eta$ holds, take an arbitrary integer $m\ge L$
and choose integers $s$ and $0\le q<Kn$ so that $m=sKn+q$ holds. Then, using \eqref{eqn:Y} and the inequalities $\frac qm \leq \frac {Kn}L \leq \eta$, we have
\begin{align*}
D(\mathcal{E}_m(x),\mu)
&\le
\sum_{i=0}^{s-1}\frac{Kn}{m}D(\mathcal{E}_{Kn}(\sigma^{iKn}x),\mu)
+\frac{q}{m}D(\mathcal{E}_q(\sigma^{sKn}x),\mu) \\
&\le
5\eta+\eta 
=
6\eta.
\end{align*}
Thus taking $m\rightarrow\infty$, we have the lemma.
\end{proof}
\end{lemma}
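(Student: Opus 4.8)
The plan is to use the defining property of $Y$ in \eqref{eqn:Y}: in any point of $Y$, every window of length $Kn$ has empirical measure within $5\eta$ of $\mu$, and averaging these windows along a long orbit segment of a generic point then controls $\nu$.

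Since $\nu$ is ergodic, I would first fix a $\nu$-generic point $x\in Y$, so that $\EEE_m(x)\to\nu$. Pick $L\in\NN$ with $Kn/L\leq\eta$; for $m\geq L$ write $m=sKn+q$ with $0\leq q<Kn$ (so $s\geq 1$ unless $\eta\geq1$, in which case the lemma is trivial since $D\leq1$), and express the empirical measure as the convex combination
\[
\EEE_m(x)=\sum_{i=0}^{s-1}\frac{Kn}{m}\,\EEE_{Kn}(\sigma^{iKn}x)+\frac qm\,\EEE_q(\sigma^{sKn}x).
\]
The metric $D$ is convex in each variable, being a weighted sum of functions $\mu\mapsto\bigl|\int\ph_n\,d\mu-\int\ph_n\,d\nu\bigr|$ that are affine in $\mu$; hence $D(\EEE_m(x),\mu)$ is at most the corresponding convex combination of the numbers $D(\EEE_{Kn}(\sigma^{iKn}x),\mu)$ and $D(\EEE_q(\sigma^{sKn}x),\mu)$.

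Each length-$Kn$ subword of $x$ starting at a multiple of $Kn$ lies in $\LLL^{\mu,5\eta}_{Kn}$ by the definition of $Y$, so $D(\EEE_{Kn}(\sigma^{iKn}x),\mu)<5\eta$ for every $i$; since the weights $Kn/m$ over $0\le i\le s-1$ sum to at most $1$, the block terms contribute strictly less than $5\eta$. For the leftover term I would use that $\MMM(X)$ has $D$-diameter at most $1$ (each summand in the definition of $D$ is bounded by $2^{-n}$), so its contribution is at most $\frac qm<\frac{Kn}{m}\leq\frac{Kn}{L}\leq\eta$. Adding these gives $D(\EEE_m(x),\mu)<6\eta$ for every $m\geq L$, and letting $m\to\infty$ (with $\EEE_m(x)\to\nu$ and $D$ continuous) yields $D(\nu,\mu)\leq6\eta$.

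This is a routine blocking estimate and I do not anticipate a genuine obstacle; the one point that deserves care is the convexity of $D$ along convex combinations of measures, which is precisely what allows the window-by-window bounds supplied by the definition of $Y$ to propagate to a bound on the full empirical measure $\EEE_m(x)$.
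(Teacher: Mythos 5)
Your proof is correct and follows the paper's argument essentially verbatim: same decomposition of $\EEE_m(x)$ into $s$ blocks of length $Kn$ plus a tail of length $q<Kn$, same use of the definition of $Y$ to bound each block term by $5\eta$, and same handling of the tail via $q/m\le Kn/L\le\eta$. You merely make explicit two facts the paper leaves implicit (convexity of $D$ in each argument, and that $\MMM(X)$ has $D$-diameter at most $1$), which is a helpful clarification but not a different route.
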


\vspace{1ex}
\emph{\textbf{Step 2: Construction of $H$.}}
For brevity of notation we write $\DDD^i = \LLL^{\mu_i,\eta}_{n_i}$.  Extend the definitions of $n_i,\DDD^i,\mu_i,a_i$ to indices $i> p$ by repeating periodically: that is, if $i=pq+r$, $1\leq r\leq p$, then $n_i = n_r$, $\DDD^i = \DDD^r$, $\mu_i = \mu_r$ and $a_i =a_r$.

By the assumption that $\LLL$ is edit approachable by $\FFF$, we can define a map $\phi_\FFF\colon \LLL\to \FFF$ such that $\hd(w,\phi_\FFF(w))\leq g(|w|)$. We extend the map $\Phi\colon \FFF^*\to \FFF$ to a map $\Phi\colon \LLL^*\to \FFF$ by `editing then gluing'. That is, given $(w^1, \ldots, w^n) \in \LLL^*$, we put $\Phieg(w^1, \ldots, w^n) = \phi_\FFF(w^1) \cdots \phi_\FFF(w^n)$. The map $\Phieg$ extends to subsets of $\LLL^\NN$ in the natural way, and we consider it here with the following domain:
\begin{equation}\label{eqn:phi1}
\Phieg \colon \prod_{j=1}^\infty \DDD^j \to X.
\end{equation}
In other words,  given $\ww = \{w^j\} \in \prod_{j=1}^\infty \DDD^j$, let $v^j = \phi_\FFF(w^j) \in \FFF$ and $\Phieg(\ww) = v^1 v^2 \cdots$.  Let $H = \Phieg(\prod_{j=1}^\infty \DDD^j)$. Then we have $H\subset X_n$ since $n_i+g(n_i)\le n$.

A sort of periodicity is built into the definition of the sequences $\Phieg(\ww)$: the word $v^i$ is an approximation of a suitable generic point for the measure $\mu_i$, and the measures $\mu_i$ repeat periodically ($\mu_{i+p}=\mu_i$).  The following lemma states that following $\Phieg(\ww)$ for a single ``cycle'' of this periodic behaviour gives a good approximation to $\mu$.  We write $\ell_j
=\ell_j(\ww) = |v^j|$ for the length of the words associated to the index $j$, and observe that $|\ell_j - n_j| \leq g(n_j) \leq g(n)$.

\begin{lemma}\label{lem:Happrox}
Fix $\ww\in\prod_{j=1}^\infty\DDD^j$.  For $q\geq 0$, let $c_q =c_q(\ww)= \sum_{r=1}^{p} \ell_{qp+r}$ be the length of the $q$th ``cycle'' in $\Phieg(\ww)$ and let $b_m =b_m(\ww)= \sum_{q=0}^{m-1} c_q$.  Then we have $D(\EEE_{c_m}(\sigma^{b_m}\Phieg(\ww)),\mu)\leq 3\eta$.
\end{lemma}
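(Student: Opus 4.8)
The plan is to read off the block structure of $\sigma^{b_m}\phi(\ww)$ and average cycle by cycle. By construction $\phi(\ww) = v^1u^1v^2u^2\cdots$, where $v^j = \phi_\GGG(w^j)\in\GGG$ has $\hd(w^j,v^j)\le g(n_j)$ and $|u^j|\le\tau$; since $b_m = \sum_{j=1}^{mp}\ell_j$ is exactly the length of the prefix $v^1u^1\cdots v^{mp}u^{mp}$, the point $\sigma^{b_m}\phi(\ww)$ begins with the word $v^{mp+1}u^{mp+1}\cdots v^{mp+p}u^{mp+p}$ of length $c_m = \sum_{r=1}^p\ell_{mp+r}$. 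Writing $\beta_{m,r} = b_m + \sum_{s=1}^{r-1}\ell_{mp+s}$ for the starting position of the block $v^{mp+r}u^{mp+r}$, the empirical measure over a block is exactly the length-weighted average of the empirical measures over its consecutive sub-blocks, so
\[
\EEE_{c_m}(\sigma^{b_m}\phi(\ww)) = \sum_{r=1}^p \frac{\ell_{mp+r}}{c_m}\,\EEE_{\ell_{mp+r}}\bigl(\sigma^{\beta_{m,r}}\phi(\ww)\bigr).
\]
Thus it is enough to show that each term is close to $\mu_r$ and that the weights $\ell_{mp+r}/c_m$ are close to $a_r$, after which the bound follows from $D(\mu,\lambda)\le\eta$ and $\lambda = \sum_{r=1}^p a_r\mu_r$.

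For a single block, recall that (under the periodic convention) $w^{mp+r}\in\DDD^r = \LLL^{\mu_r,\eta}_{n_r}$, so $D(\EEE_{n_r}(x),\mu_r) < \eta$ for every $x\in[w^{mp+r}]$. The block $v^{mp+r}u^{mp+r}$ is obtained from $w^{mp+r}$ by at most $g(n_r)$ edits (producing $v^{mp+r} = \phi_\GGG(w^{mp+r})$) followed by at most $\tau$ insertions (appending $u^{mp+r}$), so $\hd(w^{mp+r}, v^{mp+r}u^{mp+r}) \le \tau + g(n_r)$. Since $n_r\ge N$, the choice of $N$ made via Lemma~\ref{lem:stat-near} applies with $x\in[w^{mp+r}]$ and $y = \sigma^{\beta_{m,r}}\phi(\ww)$ and gives $D(\EEE_{n_r}(x), \EEE_{\ell_{mp+r}}(\sigma^{\beta_{m,r}}\phi(\ww)))\le\eta$; combining with the previous estimate via the triangle inequality yields $D(\EEE_{\ell_{mp+r}}(\sigma^{\beta_{m,r}}\phi(\ww)),\mu_r)\le 2\eta$. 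Feeding this into the displayed decomposition and using convexity of $D$ in each argument gives $D(\EEE_{c_m}(\sigma^{b_m}\phi(\ww)), \sum_{r=1}^p \frac{\ell_{mp+r}}{c_m}\mu_r) \le 2\eta$. Finally, since $|\ell_j - n_j|\le\tau + g(n_j)$ (recorded just before the lemma), $n_{mp+r} = a_rn$, $\sum_r a_r = 1$, and $g$ is a mistake function, one has $\sum_{r=1}^p|\ell_{mp+r}/c_m - a_r|\to 0$ as $n\to\infty$, uniformly in $\ww$ and $m$; having chosen $n$ (in the list of conditions in the setup) large enough that this sum is small, $D(\sum_{r=1}^p \frac{\ell_{mp+r}}{c_m}\mu_r,\lambda)$ is negligible, and the triangle inequality with $D(\lambda,\mu)\le\eta$ gives $D(\EEE_{c_m}(\sigma^{b_m}\phi(\ww)),\mu)\le 3\eta$.

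The substance is in the bookkeeping rather than the ideas: because $\phi = \Phieg$ uses the untruncated gluing map, the chunk lengths $\ell_j$ genuinely depend on $\ww$ and drift away from $n_j$, so one must verify that this drift is $O(\tau + g(n_j)) = o(n)$ uniformly in $\ww$ and $m$ — this is what lets the single choice of $n$ control the reweighting error $\sum_r|\ell_{mp+r}/c_m - a_r|$ independently of $\ww$ and $m$. One must also take care that the edit distance between $w^{mp+r}$ and the block as it literally occurs inside $\phi(\ww)$ is bounded by exactly $\tau + g(n_r)$, so that Lemma~\ref{lem:stat-near} is invoked with argument $n_r$ rather than the (possibly larger) block length; the two strict $\eta$-inequalities, together with the reweighting term which can be made arbitrarily small, are then what close the constants up to the stated $3\eta$.
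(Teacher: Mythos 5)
Your proof is correct and follows essentially the same route as the paper: decompose $\EEE_{c_m}(\sigma^{b_m}\phi(\ww))$ as the length-weighted convex combination of the empirical measures of the $p$ sub-blocks $v^{mp+r}u^{mp+r}$, bound each of these within $2\eta$ of $\mu_r$ via the edit-distance bound $\hd(w^{mp+r},v^{mp+r}u^{mp+r})\leq\tau+g(n_r)$ and the choice of $N$ following Lemma~\ref{lem:nepssep}, and then control the reweighting error $\sum_r|\ell_{mp+r}/c_m-a_r|$ using $|\ell_j-n_j|\leq\tau+g(n_j)$ and the choice of $n$. Your explicit remark that the reweighting error must be shown uniform in $\ww$ and $m$, and that one must feed $n_r$ (not the block length) into Lemma~\ref{lem:stat-near}, reflects a careful reading of exactly the same bookkeeping the paper's proof relies on.
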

\begin{proof}
Choose $x^j\in [w^j]$ for each $j\in\NN$, so that by the definition of $\DDD^j$, we have $D(\EEE_{n_j}(x^j),\mu_j)\leq \eta$.  Let $y=\sigma^{b_m}\Phieg(\ww)$ and let $d_j = \sum_{i=0}^{j-1} \ell_{mp+i}$ for $1\leq j\leq p$.   By the definition of $\Phieg$ and the property following Lemma~\ref{lem:nepssep}, we have
\[
D(\EEE_{\ell_j}(\sigma^{d_j} y),\mu_j) \leq D(\EEE_{\ell_j}(\sigma^{d_j} y), \EEE_{n_j} (x^{mp+j})) + D(\EEE_{n_j} (x^{mp+j}),\mu_j) \leq 2\eta.
\]
Observe that $c_q\approx n$: more precisely, we have
\begin{equation}\label{eqn:cq}
|c_q - n| \leq \sum_{r=1}^p |\ell_{qp + r} - n_r| \leq p g(n).
\end{equation}
Taking convex combinations gives
\begin{align*}
D(\mu,\EEE_{c_m}(y)) &\leq D\left(\sum_{j=1}^p a_j \mu_j,\,  \sum_{j=1}^p \frac {\ell_j}{c_m} \EEE_{\ell_j}(\sigma^{d_j} y)\right) + \eta \\
&\leq \left(\sum_{j=1}^p \left| a_j - \frac{\ell_j}{c_m} \right|\right) + 2\eta \leq 3\eta,
\end{align*}
provided $N$ is chosen large enough such that $n\geq n_j\geq N$, and such that \eqref{eqn:cq} guarantees we have $\sum_{j=1}^p |a_j - \frac{\ell_j}{c_m}| \leq \eta$.
\end{proof}

We are now in a position to show that $H\subset Y$.  Given $y = \Phieg(\ww) \in H$ and $t\in\NN$, we can choose $m_1,m_2$ such that
\[
b_{m_1-1} \leq t < b_{m_1} < b_{m_2} \leq t+Kn < b_{m_2+1},
\]
and so
\[
\EEE_{Kn}(\sigma^t y) = \left(\sum_{q=m_1}^{m_2-1} \frac{c_q}{Kn}\EEE_{c_q}(\sigma^{b_q}y)\right) + \xi_1 \EEE_{b_{m_1}-t}(\sigma^ty) + \xi_2 \EEE_{t+Kn-bm_2}(\sigma^{b_{m_2}}y),
\]
where $0\leq \xi_1,\xi_2 \leq \frac{n+pg(n)}{Kn} \leq \eta$.  Each of the empirical measures in the large sum is within $3\eta$ of $\mu$, by Lemma~\ref{lem:Happrox}, and thus we have
\[
D(\EEE_{Kn}(\sigma^t y),\mu) \leq 5\eta.
\]
In particular, this shows that $y\in Y$.

\vspace{1ex}
\emph{\textbf{Step 3. Estimation of entropy of $H$.}}
Now we use the definition of $H$ to estimate its topological entropy. Our key tool will be the estimate obtained in Lemmas \ref{lem:edit-balls}. 

\begin{lemma}\label{lem:Hentropy}
The topological entropy of $H$ is at least $h(\mu) - 4\eta$.
\end{lemma}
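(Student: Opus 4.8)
The plan is to estimate the entropy of $H=\Phieg\big(\prod_{j=1}^\infty\DDD^j\big)$ by counting words in $\LLL(H)$. For each $m$ I would compare the number $\prod_{j=1}^{mp}\#\DDD^j$ of tuples $(w^1,\dots,w^{mp})\in\prod_{j=1}^{mp}\DDD^j$ with the number of distinct words of the form $\Phieg(\ww)|_{[0,N_m)}$ that these tuples produce, where $N_m\approx mn$ is an appropriate length; the quotient of the two is controlled once the multiplicity of the map $\ww\mapsto\Phieg(\ww)|_{[0,N_m)}$ is bounded above. The lower bound on the number of inputs is immediate from Lemma~\ref{lem:nepssep}: grouping indices into blocks of length $p$ and using $\DDD^r=\LLL^{\mu_r,\eta}_{n_r}$ with $n_r=a_rn\ge N$, affinity of the entropy map ($h(\lambda)=\sum_{r=1}^p a_rh(\mu_r)$), and $h_r\ge h(\mu_r)-\eta$, one gets $\prod_{r=1}^p\#\DDD^r\ge e^{n\sum_r a_rh_r}\ge e^{n(h(\lambda)-\eta)}$, hence $\prod_{j=1}^{mp}\#\DDD^j\ge e^{mn(h(\lambda)-\eta)}$.

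For the multiplicity I would use the factorisation $\Phieg=\Phi\circ(\phi_\GGG\times\phi_\GGG\times\cdots)$. In the editing step, $w^j$ lies in the ball of radius $g(n_j)$ about $v^j:=\phi_\GGG(w^j)$ in the edit metric, so by Lemma~\ref{lem:edit-balls} at most $e^{n_j\varepsilon(n_j)}$ words of $\LLL_{n_j}$ map to $v^j$, where $\varepsilon(n)\to0$ since $g$ is a mistake function; choosing $N$ large enough that $\varepsilon(n_j)\le\eta$ for all relevant $j$, the editing step contributes a factor at most $e^{\eta\sum_jn_j}=e^{\eta mn}$ to the multiplicity. In the gluing step, for a fixed length vector $(\abs{v^1},\dots,\abs{v^{mp}})$ the glued word $v^1u^1\cdots v^{mp}u^{mp}$ determines the tuple $(v^1,\dots,v^{mp})$ up to a factor $C^{mp}$ by Lemma~\ref{lem:preimages-general} (the full glued word carries at least the information of the truncation used there), and since $\abs{v^j}\in[n_j-g(n_j),n_j+g(n_j)]$ there are at most $(2g(n)+1)^{mp}$ admissible length vectors; finally, $\abs{u^j}\le\tau$, so the relevant prefixes have a length $N_m$ satisfying $\abs{N_m-mn}\le m\big(p\tau+\sum_{r=1}^p g(n_r)\big)$.

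Putting these together, for each $m$ there is a length $N_m$ as above and a polynomial $P$ such that
\[
\#\LLL(H)_{N_m}\ \ge\ e^{mn(h(\lambda)-2\eta)}\,C^{-mp}\,(2g(n)+1)^{-mp}\,P(m)^{-1}.
\]
Dividing by $N_m$ and letting $m\to\infty$ (and using that $h(H)$, being the $\ulim$ over all $N$ of $\frac1N\log\#\LLL(H)_N$, dominates the $\ulim$ along $N_m$), I would invoke \eqref{eqn:hn} in the form $\frac{n}{n+p\tau+\sum_r g(n_r)}(h(\lambda)-\eta)\ge h(\lambda)-2\eta$ to absorb the length in the denominator into one factor $\eta$; the editing contribution $e^{\eta mn}$ supplies another $\eta$; and the remaining factors $C^{mp}$ and $(2g(n)+1)^{mp}$ contribute $\frac{p\log C}{n}$ and $\frac{p\log(2g(n)+1)}{n}$ after division, which are negligible since $n$ is chosen large and $\log g(n)=o(n)$. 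This yields $h(H)\ge h(\lambda)-3\eta>h(\mu)-4\eta$.

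The main obstacle is the multiplicity estimate: one must control the collapsing caused by the editing step (Lemma~\ref{lem:edit-balls}) and by the gluing step (Lemma~\ref{lem:preimages-general}) simultaneously, and keep track of the varying lengths $\abs{v^j}$ and $\abs{u^j}$ carefully enough that both $N_m$ and the number of admissible length vectors stay within an $o(mn)$ factor of $mn$. Once this bookkeeping is in place, the conclusion is just the assembly of the input count from Lemma~\ref{lem:nepssep}, the two multiplicity bounds, and the arithmetic already arranged in \eqref{eqn:hn}.
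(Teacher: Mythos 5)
Your proposal follows essentially the same route as the paper's own proof of Lemma~\ref{lem:Hentropy}: lower-bound the number of inputs via Lemma~\ref{lem:nepssep}, control the multiplicity of the ``edit then glue'' map by combining the edit-ball count (Lemma~\ref{lem:edit-balls}) with the truncated-gluing bound (Lemma~\ref{lem:preimages-general}) together with a $(O(g(n)))^{mp}$ factor for the possible length vectors, truncate to a length $\approx mb'$, and finish with the arithmetic already built into~\eqref{eqn:hn}. The only spot where the paper is sharper is in fixing the truncation length: it uses exactly $mb'=m\bigl(n+p\tau+\sum_{r=1}^p g(n_r)\bigr)$, which guarantees $\sum_j n_j'\le mb'$ so that the truncated word determines the truncation Lemma~\ref{lem:preimages-general} actually controls, whereas your ``$N_m\approx mn$'' leaves open the case $N_m<\sum_j n_j'$; choosing $N_m=mb'$ as the paper does removes this ambiguity without changing anything else in your argument.
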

\begin{proof}
Fix $m\in\NN$ and set $b'=n+\sum_{j=1}^pg(n_j)$. Note that
\begin{equation}
mb'\ge \sup_{\ww\in \prod_{j=1}^\infty \DDD^j }b_m(\ww)\text{ and }\frac{n}{b'}(h(\lambda)-\eta)\ge h(\lambda)-2\eta
\end{equation}
holds, where $b_m(\ww)$ is as in Lemma \ref{lem:Happrox}.  Moreover, since $n=\sum_{j=1}^p n_j$ and each $n_j\geq N$, we have $b' \geq pN$.

Let $\zz\in \prod_{j=1}^\infty \DDD^j$ be arbitrary,
and given $\ww\in \prod_{j=1}^{mp} \DDD^j$, let $\ww\zz$ denote the concatenation of $\ww$ and $\zz$, so that $(\ww\zz)^j = w^j$ if $1\leq j \leq mp$ and $z^{j-mp}$ otherwise.

Let $\phi_m\colon \prod_{j=1}^{mp} \DDD^j \to \LLL_{mb'}$ be the map that takes $w^1,\dots,w^{mp}$ to the first $mb'$ symbols of $\Phieg(\ww\zz)$, where 
$\Phieg$ is the `edit and glue' map from Step 2.  Note that $\phi_m(\prod_{j=1}^{mp} \DDD^j) \subset \LLL_{mb'}(H)$.

Now in order to estimate the entropy of $H$, we will use our estimates on the cardinality of $\DDD^j$ together with a bound on $\#\phi_m^{-1}(v)$ for $v\in \LLL_{mb'}$. 
Recall that $\phi_\FFF\colon \LLL\to \FFF$ is a map which satisfies $\hd(w,\phi_\FFF(w))\leq g(|w|)$. First we use Lemma~\ref{lem:edit-balls}, recalling that 
$\frac{g(n)}{n}\to 0$, to fix $N_0$ sufficiently large so that
\begin{equation}\label{eqn:phiGinv}
\#\{w\in\LLL \mid \phi_\FFF(w)=v\} \leq e^{\eta |v|/2}
\end{equation}
for every $v\in\FFF$ with $|v|\geq N_0$.

Now as $w$ ranges over $\DDD^j$, the word $\phi_\FFF(w)$ may vary in length; however, since its $\hat{d}$-distance from $w$ is at most $g(|w|)$, the number of different lengths it can take is at most $2g(|w|)+1$.  As above, given $\ww \in \prod_{j=1}^{mp} \DDD^j$ we write $\ell_j = \ell_j(\ww) = |\phi_\FFF(w^j)|$, so $\ell_j \in [n_j - g(n_j), n_j + g(n_j)]$.

We see that as $\ww$ ranges over $\prod_{j=1}^{mp} \DDD^j$, the number of different values taken by $(\ell_1,\dots,\ell_{mp})$ is bounded above by
\begin{equation}\label{eqn:vecn}
(2g(n)+1)^{mp} = e^{mp\log(2g(n)+1)} \leq e^{mb'\frac{\log(2g(n)+1)}{\min_j n_j}}
\leq e^{\eta mb'/2},
\end{equation}
where the last inequality follows from observing that $n_j \approx a_j n$ and choosing $N$ sufficiently large (since each $n_j\geq N$).

Given $u\in \LLL_{mb'}(H)$ and a fixed choice of $(\ell_1,\dots,\ell_{mp})$, it follows from \eqref{eqn:phiGinv} that the number of $\ww\in \prod_{j=1}^{mp} \DDD^j$ with $\phi_m(\ww) = v$ and $\ell_j(\ww)=\ell_j$ for each $1\leq j\leq mp$ is at most
\[
\prod_{j=1}^{mp} e^{\eta \ell_j/2} = e^{\frac \eta 2 \sum_{j=1}^{mp} \ell_j}
 \leq e^{\frac \eta 2 mb'}.
\]
Combining this with \eqref{eqn:vecn}, we see that
$
\#\phi_m^{-1}(v) \leq e^{\eta mb'},
$
and thus we obtain the estimate
\[
\#\LLL_{mb'}(H) \geq e^{-\eta mb'} \prod_{j=1}^{mp} (\#\DDD^j).
\]
Using Lemma~\ref{lem:nepssep}, it follows that
\begin{align*}
h(H) &\geq \left(\lim_{m\to\infty} \frac 1{mb'} \sum_{j=1}^{mp} \log\#\DDD^j \right) - \eta \\
&\geq \left(\lim_{m\to\infty} \frac 1{mb'} \sum_{j=1}^{mp} n_j h_j \right) - \eta 
\geq h(\lambda) - 3\eta\geq h(\mu)-4\eta.\qedhere
\end{align*}
\end{proof}

\vspace{1ex}
\emph{\textbf{Step 4: End of the proof of Proposition \ref{prop:ent-appr}.}}
Let $\mu'$ be an ergodic measure of maximal entropy for $Y$.
Lemma \ref{ergodic} shows that $D(\mu',\mu)\leq 6\eta$, and Lemma \ref{lem:Hentropy} shows that $h(\mu') \geq h(\mu)-4\eta$.  Since $Y \subset X_n$ by definition, this completes the proof of Proposition~\ref{prop:ent-appr}.

\subsection{Lower bounds}

Now we complete the proof of Theorem~\ref{thm:main} by showing that the lower bound
\begin{equation}
\label{lowereq}
\varliminf_{n\rightarrow\infty}\frac{1}{n}\log m\left(\{x\in X:\mathcal{E}_n(x)\in U\}\right)\ge \sup_{\mu\in U}q^{\varphi}(\mu)
\end{equation}
holds for any open set $U\subset\mathcal{M}(X)$, where $q^{\varphi}(\mu)$ is as in~\eqref{eqn:rate}.

To show (\ref{lowereq}), it is sufficient to show that
for any $\mu\in\mathcal{M}(X)$ and any open neighborhood $U\subset\mathcal{M}(X)$ of $\mu$,
\begin{equation}
\label{lower3}
\varliminf_{n\rightarrow\infty}\frac{1}{n}\log m\left(\{x\in X:\mathcal{E}_n(x)\in U\}\right)\ge q^{\varphi}(\mu).
\end{equation}
If $\mu$ is not $\sigma$-invariant, then $q^{\varphi}(\mu)=-\infty$ and so the equation (\ref{lower3})
is trivial. Thus, we will prove the equation (\ref{lower3}) for $\mu\in\mathcal{M}_{\sigma}(X)$.

Let $\mu\in\mathcal{M}_{\sigma}(X)$ and $\eta>0$. Then by Proposition \ref{prop:ent-appr}, there exists an ergodic measure
$\nu\in U \cap \mathcal{M}_{\sigma}^e(X_k)$ for some $k$ such that $h(\nu)>h(\mu)-\eta$ and $\int \varphi\, d\nu>\int \varphi\, d\mu-\eta$.  We use $\nu$ to build a subset of $\EEE_n^{-1}(U)$, as follows.

Take $\zeta>0$ so small that $\mathbb{B}(\nu,2\zeta)\subset U$ and every measure $\nu'$ in this neighbourhood has $|\int\ph\,d\nu' - \int\ph\,d\nu|\leq \eta$.  In particular, for every $w\in \LLL^{\nu,\zeta}$, we have $[w]\subset \EEE_n^{-1}(U)$.
Then, again by \cite[Propositions 2.1 and 4.1]{PfS}, for all sufficiently large $n$ we have
\begin{equation}\label{eqn:Lnz}
\# (\LLL_n^{\nu,\zeta}\cap\LLL(X_k)) \geq e^{n(h(\mu)-\eta)}.
\end{equation}

We note that by the Gibbs property \eqref{eqn:XnGibbs}, we have
\[
m[w] \geq K_k e^{-nP(\ph) + S_n\ph(x)}
\]
for all $w\in\LLL(X_k)_n$ and $x\in [w]$.  In particular, when $w\in \LLL^{\nu,\zeta}$ this yields
\[
m[w] \geq K_k e^{-nP(\ph) + n \int\ph\,d\nu - n\eta}
\geq K_k e^{n(-P(\ph) + \int\ph\,d\mu - 2\eta)}.
\]
Using the estimate~\eqref{eqn:Lnz} and the fact that $[w]\subset\EEE_n^{-1}(U)$ for every $w\in\LLL^{\nu,\zeta}$, we obtain
\[
m(\EEE_n^{-1}(U)) \geq K_k e^{n( h(\mu) -P(\ph) + \int\ph\,d\mu - 3\eta)}.
\]
Since $\eta>0$ was arbitrary, this establishes the lower bound~\eqref{lower3}.

\section{Applications}\label{sec:apps}

\subsection{$S$-gap shifts}\label{sec:Sgap}
The family of $S$-gap shifts were introduced in \cite{LM}, and have received a recent increase in attention \cite{AJ, CT, BG}. To check the conditions of Theorem \ref{cor:LDP-small-obstr} for a Bowen potential $\ph$, we verify the specification properties \ref{A.spec} and \ref{I} on $\GGG$ and $\GGG^M$, the edit approachability property \ref{A.ASD}, and the estimate \ref{III} on $P(\CCC^p\cup \CCC^s,\ph)$.

\subsubsection{Specification properties} \label{sec:sgapspec}
It is immediate that $\GGG$ has the free concatenation property, and thus Condition \ref{A.spec} is satisfied.  Condition \ref{I} holds because a word in $\GGG^M$ has the form $0^n10^{n_1}10^{n_2}1\cdots 10^{n_k}10^m$, where $n_i\in S$ for all $1\le i\le k$, and $n,m \leq M$. Thus, any word in $\GGG^M$ can be extended to a word in $\GGG$ by adding a uniformly bounded number of symbols at each end (the number of symbols to be added depends on $M$, but not on the length of the word), and this implies that $\GGG^M$ has the (W)-specification property.

\subsubsection{Edit approachability}

Because $S$ is infinite, we can choose for every $n\in \NN$ some $s_n\in S$ such that $\frac {s_n}n\to 0$ and $s_n\to\infty$.  (Note that the same element of $S$ may appear as $s_n$ for multiple values of $n$.)  Now define $g\colon \NN\to\NN$ by $g(n) := 2(\lceil n/s_n\rceil + s_n)$, and observe that $g$ is a mistake function.

Let $z\in\mathcal{L}(X)_n$ and write $s=s_n$. The word $z$ has the form
\[
z =0^k10^{n_1}10^{n_2}\cdots 0^{n_i}10^\ell.
\]
We now change at most $k/s$ of the symbols $0^k$ to form the word
$z^p:=0^i10^s10^s\cdots 0^s 10^s$ $(0\le i\le s)$.
We also change at most $\ell/s$ of the symbols $0^\ell$ to form the word
$z^s:=0^s10^s10^s\cdots 0^s 10^j$ $(0\le j\le s)$.
We set $w:=z^p10^{n_1}10^{n_2}\cdots 0^{n_i}1z^s$,
$u:=0^{s-i}$ and $v:=0^{s-j}1$.
Then we have $\hat{d}(z,uwv)\le 2(\lceil n/s_n\rceil+s_n) =g(n)$, and
$u w v\in \mathcal{G}$ by the definition of $\GGG$. This shows that $\GGG$ satisfies \ref{A.ASD}. 


\begin{remark}
When $\Sigma_S$ is mixing -- that is, when $\gcd(S+1)=1$ -- it is possible to show that $\LLL$ is Hamming approachable by $\GGG$. The idea is to combine the argument for edit approachability with some additional combinatorial estimates.  It immediately follows that mixing $S$-gap shifts have the almost specification property.\footnote{In \cite{CT}, the first two authors claimed to give an example of an $S$-gap shift without the almost specification property. That example was in error - there was an elementary mistake in the computation, and the argument here does in fact yield the almost specification property for that example.} It is also not hard to show the following spectral decomposition result: if $d=\gcd(S+1) > 1$, then $(\Sigma_S, \sigma^d)$ is topologically conjugate to a  union of $d$ disjoint mixing $S$-gap shifts. 
\end{remark}

\subsubsection{Estimating $P(\CCC^p\cup\CCC^s,\ph)$}

Now we show that if $\ph$ is any potential with the Bowen property on an $S$-gap shift, then $P(\CCC^p \cup \CCC^s,\ph) < P(\ph)$, verifying Condition \ref{III}.  It is easy to see that $h(\CCC^p \cup \CCC^s) = 0$, so it suffices to show that
\begin{equation}\label{eqn:gap}
P(\ph) > \ulim_{n\to\infty} \sup_{x\in X} \frac 1n S_n\ph(x),
\end{equation}
which is equivalent to every equilibrium state for $\ph$ having positive entropy. Our  strategy is to produce a large number of admissible words that are close (in the edit metric) to a given word, so that no single word can carry full pressure. This strategy was also used to establish \eqref{eqn:gap} for $\beta$-shifts in \cite[Proposition 3.1]{CT2}.  
For $S$-gap shifts, we must deal with a difficulty which does not occur for $\beta$-shifts:  if $x\in \Sigma_S$ is such that positions $i$ and $j$ both admit edits yielding new words $x',x''\in \Sigma_S$, it may not be possible to make both edits simultaneously. This lack of independence between the possible edits means that it is more difficult to produce nearby words than in the case of $\beta$-shifts. Here, we state a sequence of lemmas which prove \eqref{eqn:gap}, 
whose proofs are given in \S \ref{sec:lemmas}.
\begin{lemma}\label{lem:zero}
We have $P(\ph) > \ph(0)$.
\end{lemma}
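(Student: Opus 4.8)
The claim $P(\ph) > \ph(0)$ for an $S$-gap shift $\Sigma_S$ with infinite $S$ is really a statement that the fixed point $0^\infty$ (whose Birkhoff average for $\ph$ is $\ph(0)$) cannot carry all of the pressure, because $\Sigma_S$ has positive entropy and $\ph$ has bounded oscillation on each cylinder. The approach I would take is to exhibit, for large $n$, an exponentially large (in $n$) family of words $w\in\LLL_n$ on which $S_n\ph$ is within a bounded constant of $n\ph(0)$, and then feed these into the definition of pressure.

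\textbf{Key steps.} First I would use that $S$ is infinite to pick a large $s\in S$, and for each length $n$ consider words of the form $w = 0^{a_1}10^{a_2}1\cdots 0^{a_k}1 0^{b}$ where each $a_i\in\{s, s+1\}$ (both of which lie in $S$ once we also throw $s+1$ into play, or more carefully: pick two distinct elements $s<s'$ of $S$ and let each gap be either $s$ or $s'$); this is a valid word in $\LLL(\Sigma_S)$. Choosing the number $k$ of blocks appropriately and filling the remainder with $0$'s, one gets roughly $2^{n/(s'+1)}$ such words of length $n$, so this subcollection $\DDD$ has $h(\DDD) \geq \frac{\log 2}{s'+1} > 0$. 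Second, I would estimate $S_n\ph$ on such a word: since the word agrees with $0^\infty$ except at the (roughly $n/s$) positions carrying a $1$, and $\ph$ is continuous (hence uniformly close to $\ph(0)$ on cylinders $[0^m]$ for $m$ large — here is where I'd want the gaps $s,s'$ to be large, so that most coordinates $\sigma^j x$ for $x\in[w]$ start with a long block of $0$'s), I can bound $|\frac1n S_n\ph(x) - \ph(0)|$ by a quantity that tends to $0$ as $s\to\infty$, uniformly in $n$ and in $x\in[w]$. Actually for the clean inequality it is enough to get $\liminf_n \inf_{w\in\DDD_n}\inf_{x\in[w]}\frac1n S_n\ph(x) \geq \ph(0) - \epsilon(s)$ with $\epsilon(s)\to0$. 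Third, I would combine these:
\[
P(\ph) \;\geq\; P(\DDD,\ph) \;=\; \varlimsup_n \frac1n\log\sum_{w\in\DDD_n} e^{\sup_{[w]}S_n\ph} \;\geq\; h(\DDD) + \bigl(\ph(0) - \epsilon(s)\bigr).
\]
Taking $s$ (equivalently $s'$) large enough that $\epsilon(s) < h(\DDD)$ — note $h(\DDD)\geq \frac{\log 2}{s'+1}$ does shrink with $s'$, so I need $\epsilon(s)$ to shrink faster; this is arranged by choosing $s' = s+1$ and using uniform continuity of $\ph$ to make $\epsilon(s) = o(1/s)$, or more robustly by a quantitative modulus-of-continuity argument — gives $P(\ph) > \ph(0)$.

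\textbf{Main obstacle.} The delicate point is the competition between $h(\DDD)$ and $\epsilon(s)$: both go to $0$ as the gap size grows, so a crude bound $\epsilon(s) = O(1/s)$ against $h(\DDD) = \Theta(1/s)$ is not obviously good enough. The fix is that the entropy loss in $\epsilon(s)$ comes only from the $\approx n/s$ coordinates near a symbol $1$, and on those coordinates $|\ph(\sigma^j x) - \ph(0)|$ is bounded by $\operatorname{osc}(\ph)$ (a constant), so $\epsilon(s) \leq \frac{1}{s}\operatorname{osc}(\ph) \cdot C$ while $h(\DDD) \geq \frac{\log 2}{s+1}$ — and one can further improve the entropy count by allowing each gap to range over \emph{many} values in $S$ (since $S$ is infinite there are arbitrarily many choices near any large threshold... though $S$ need not be an interval), or simply by letting only a positive fraction of gaps vary, decoupling the two scales. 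Alternatively, and perhaps most cleanly, one notes $P(\ph) = \sup_\mu(h(\mu) + \int\ph\,d\mu)$ and uses the measure of maximal entropy $m_0$ (which exists and has $h(m_0) = h(\Sigma_S) > 0$) together with $\int \ph\,dm_0 \geq \inf\ph > \ph(0) - \operatorname{osc}(\ph)$ — but that only gives $P(\ph) \geq h(\Sigma_S) + \ph(0) - \operatorname{osc}(\ph)$, which is not directly $>\ph(0)$ unless $\operatorname{osc}(\ph) < h(\Sigma_S)$; so the constructive family-of-words argument, with the decoupling trick for the gap lengths, is the way I would actually carry it out.
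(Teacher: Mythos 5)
Your plan identifies the right intuition (construct exponentially many admissible words on which the Birkhoff average is close to $\ph(0)$, then compare the entropy gain against the Birkhoff cost), and you correctly diagnose the obstacle: with binary gap choices, entropy scales like $\frac{\log 2}{s}$ while the error term scales like $\frac{1}{s}$, and these are commensurate. However, you do not resolve that obstacle, and the proposed fixes do not work as stated.

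The claim that ``uniform continuity of $\ph$'' can be used to make $\epsilon(s) = o(1/s)$ is false. For a word $w=0^{a_1}10^{a_2}1\cdots 0^{a_k}1$ and $x\in[w]$, write $S_N\ph(x)$ as a sum over gaps; the contribution to $|S_N\ph(x) - N\ph(0)|$ from one gap of length $a$ is bounded above only by $\sum_{r=0}^{a-1}\omega(2^{-r}) + \operatorname{osc}(\ph)$, where $\omega$ is the modulus of continuity. With mere continuity this sum can diverge in $a$; even with the Bowen property (which is what is actually assumed in this section of the paper and which you should invoke), the best one gets is a fixed constant $V' = V + \ph(0) - \inf\ph$ per occurrence of the symbol $1$. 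This yields $\epsilon(s) = \Theta(1/s)$, not $o(1/s)$. So the competition is genuinely undecided with binary choices unless $\operatorname{osc}(\ph) + V$ happens to be smaller than $\log 2$, which is not guaranteed.

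The missing idea, which is exactly how the paper resolves this, is to make the number of gap choices grow. Fix $k$ distinct elements $n_1,\dots,n_k\in S$ and, for each permutation $\pi$ of $\{1,\dots,k\}$, form the word $w_\pi = 0^{n_{\pi(1)}}1\cdots 0^{n_{\pi(k)}}1$, all of the same length $N = \sum(n_j+1)$. This gives $k!$ words, so the entropy gain per gap is roughly $\frac{1}{k}\log(k!) \approx \log k$, while the Bowen-property cost is still exactly $V'$ per gap, independent of $k$. Concatenating $m$ such permuted blocks and sending $m\to\infty$ gives
\[
P(\ph) \geq \ph(0) + \frac{k}{N}\left(\log k - 1 - \tfrac 1k - V'\right),
\]
and taking $k$ large makes $\log k > V' + 1$, which is the decisive step. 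Note this construction requires only that $S$ contain $k$ distinct elements (no interval or arithmetic structure), sidestepping precisely the difficulty you flagged (``$S$ need not be an interval''). Without some device of this kind to make the per-gap entropy grow unboundedly, your argument does not close.
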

In the following lemma, we use Lemma \ref{lem:zero} to control words which have a small frequency of occurence of the symbol $1$.
\begin{lemma} \label{lem:rare}
 There exists $\epsilon>0$ and a constant $L=L(\epsilon)$ so that if $x_1\cdots x_n$ contains fewer than $\epsilon n$ occurrences of the symbol $1$, then 
 $\frac 1n S_n\ph(x) \leq \ph(0) + L < P(\ph) - L.$
\end{lemma}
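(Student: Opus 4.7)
The strategy exploits continuity of $\ph$ at the fixed point $0^\infty\in X$: when $x_1\cdots x_n$ contains few $1$'s, most of the shifts $\sigma^k x$ begin with long runs of $0$'s, forcing $\ph(\sigma^k x)$ to be close to $\ph(0)=\ph(0^\infty)$.

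Concretely, I would proceed as follows. Lemma~\ref{lem:zero} supplies $\gamma:=P(\ph)-\ph(0)>0$, so first fix any $L\in(0,\gamma/2)$; this immediately gives $\ph(0)+L<P(\ph)-L$. Next, since $0^\infty\in X$ (every $0^n$ lies in $\LLL$ and $X$ is closed), uniform continuity of $\ph$ on the compact space $X$ yields an integer $M$ such that $|\ph(y)-\ph(0)|<L/3$ for every $y\in X$ with $y_1=\cdots=y_M=0$. Now, given $x$ with fewer than $\epsilon n$ occurrences of $1$ in $x_1\cdots x_n$, call an index $k\in\{0,\dots,n-1\}$ \emph{good} if $x_{k+1}=\cdots=x_{k+M}=0$, and \emph{bad} otherwise. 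Each occurrence of the symbol $1$ in positions $1,\dots,n$ can spoil at most $M$ indices, and at most $M$ further indices (those with $k>n-M$) are declared bad for boundary reasons, so the number $B$ of bad indices is at most $M\epsilon n+M$. Bounding $\ph$ by $\ph(0)+L/3$ on good indices and by $\|\ph\|_\infty$ on bad ones gives
\begin{equation*}
\frac{1}{n}S_n\ph(x)\le \ph(0)+\frac{L}{3}+\left(M\epsilon+\frac{M}{n}\right)2\|\ph\|_\infty,
\end{equation*}
and choosing $\epsilon<L/(12M\|\ph\|_\infty)$ renders the right-hand side $\le\ph(0)+L$ for all $n$ sufficiently large.

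The main bookkeeping is the balance among the three constants: the continuity modulus $M$, the density threshold $\epsilon$, and the target gap $L$. The only subtlety is that for very small $n$ the term $M/n$ is not negligible, but this does not obstruct the argument since Lemma~\ref{lem:rare} is invoked only through the asymptotic $\limsup$ in~\eqref{eqn:gap}.
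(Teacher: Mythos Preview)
Your argument is correct. The ingredients---the gap $\gamma=P(\ph)-\ph(0)>0$ from Lemma~\ref{lem:zero}, uniform continuity of $\ph$ at $0^\infty$, and the good/bad index count---fit together as you describe, and your observation that the $M/n$ boundary term is harmless because the lemma is only used through the $\limsup$ in~\eqref{eqn:gap} is accurate.

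However, the route differs from the paper's. The paper does not use pointwise continuity at $0^\infty$ at all; instead it exploits the Bowen property of $\ph$ (already in force throughout this section) via the constant $V'$ introduced in the proof of Lemma~\ref{lem:zero}. Writing $x_1\cdots x_n$ as a concatenation of maximal $0$-runs separated by fewer than $\epsilon n$ symbols $1$, the Bowen bound gives $S_k\ph\le k\ph(0)+V$ on each $0$-run of length $k$, and summing yields $S_n\ph(x)\le n\ph(0)+\epsilon n V'$ in a single line. One then chooses $\epsilon$ so that $\ph(0)+2\epsilon V'<P(\ph)$ and sets $L=\epsilon V'$. Compared with your proof, this is shorter and avoids the ``sufficiently large $n$'' caveat, at the cost of invoking the stronger Bowen hypothesis rather than mere continuity. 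Your approach has the pleasant feature that it isolates the step from the Bowen assumption (which is still needed elsewhere, in Lemma~\ref{lem:zero} and Lemma~\ref{lem:biggerthan}), but the paper's version is the more economical one given the standing hypotheses.
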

We now control words which do not have a small frequency of occurence of the symbol $1$. This is where we use our strategy of creating a large number of new words by making edits.  We need the following estimate, which is a consequence of Stirling's formula.
\begin{lemma}\label{eqn:stirling3} 
If $\delta n \leq k \leq \frac n2$, then $\log {n\choose k} \geq -n\delta\log\delta - 2\log n$.
\end{lemma}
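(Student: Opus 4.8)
The plan is to deduce the lemma from the standard entropy lower bound for binomial coefficients. Writing $H(p) = -p\log p - (1-p)\log(1-p)$ for the natural-logarithm binary entropy function, I would first establish
\[
\binom{n}{k} \;\geq\; \frac{1}{n+1}\, e^{nH(k/n)}.
\]
This follows by expanding $1 = (p+(1-p))^n = \sum_{j=0}^{n}\binom{n}{j}p^j(1-p)^{n-j}$ with $p = k/n$, observing that the $j=k$ term equals $\binom{n}{k}e^{-nH(k/n)}$, and noting that this term is the largest of the $n+1$ summands (it is the mode of $\mathrm{Bin}(n,k/n)$, since $k$ is an integer with $1\leq k\leq n-1$ once $0<\delta\leq k/n\leq\tfrac12$ and $n\geq 2$; the case $n=1$ is vacuous, as $0<\delta\leq k\leq\tfrac12$ has no integer solution). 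Hence that term is at least $\tfrac{1}{n+1}$, which gives the display. One can reach the same inequality by applying Stirling's formula term by term to $\log\binom{n}{k}=\log n!-\log k!-\log(n-k)!$ — presumably the route suggested by the label of the lemma — but the entropy formulation keeps the bookkeeping cleaner.

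Next I would bound $H(k/n)$ from below. The hypothesis $\delta n\leq k\leq n/2$ forces $\delta\leq k/n\leq\tfrac12$, in particular $\delta\in(0,\tfrac12]$, and $H$ is nondecreasing on $[0,\tfrac12]$ because $H'(p)=\log\frac{1-p}{p}\geq 0$ there. Therefore $H(k/n)\geq H(\delta)=-\delta\log\delta-(1-\delta)\log(1-\delta)\geq-\delta\log\delta$, the last step using $(1-\delta)\log(1-\delta)\leq 0$. Combining with the display yields $\log\binom{n}{k}\geq nH(\delta)-\log(n+1)\geq-n\delta\log\delta-\log(n+1)$, and since $\log(n+1)\leq 2\log n$ for every $n\geq 2$, the claim follows.

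I do not expect a genuine obstacle; this is a routine estimate. The one point that needs a little care is that it is tempting to keep only the single term $-\tfrac kn\log\tfrac kn$ of the entropy and compare it directly with $-\delta\log\delta$, which fails because $p\mapsto -p\log p$ is not monotone on $[0,\tfrac12]$ (it decreases on $[1/e,\tfrac12]$); passing through the full entropy $H$ and using its monotonicity on $[0,\tfrac12]$ sidesteps this. The only other thing to track is absorbing the lower-order term $\log(n+1)$ into the available slack, which is exactly why the statement is phrased with a $-2\log n$ correction rather than a sharper one.
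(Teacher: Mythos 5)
Your proof is correct, and it takes a genuinely different route from the paper's. The paper estimates $\log\binom{n}{k}=\log n!-\log k!-\log(n-k)!$ term by term using the two Stirling-type bounds $\log k!\geq k\log k-k-1$ and $\log k!\leq k\log k-k+1+\log(k+1)$, arriving at $\log\binom{n}{k}\geq n\,h(k/n)-3-\log(k+1)-\log(n-k+1)$ and then asserting this is $\geq n\,h(k/n)-2\log n$ for large $n$; the paper does not spell out the final reduction from $n\,h(k/n)$ to $-n\delta\log\delta$. You instead use the combinatorial bound $\binom{n}{k}\geq\frac{1}{n+1}e^{nH(k/n)}$ coming from the binomial expansion of $(p+(1-p))^n$ at $p=k/n$ together with the observation that $j=k$ is the mode, and then apply the monotonicity of $H$ on $[0,\tfrac12]$ to get $H(k/n)\geq H(\delta)\geq-\delta\log\delta$. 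Your route is cleaner in two respects: the additive error $-\log(n+1)$ is smaller than the paper's $-3-\log(k+1)-\log(n-k+1)$, so your version of the lemma holds for every $n\geq 2$ rather than only for $n$ large; and you explicitly carry out the monotonicity step that justifies replacing $H(k/n)$ with $-\delta\log\delta$. In fact the paper's displayed inequality $\log\binom{n}{k}\geq n\,h(k/n)-2\log n$ is not literally true for $k$ near $n/2$ (the constant $3$ is too large); what saves the stated lemma is exactly the nonnegative discarded term $-n(1-\delta)\log(1-\delta)$, and your argument makes that discard explicit, whereas the paper's compresses it away. You are also right to flag that one cannot simply compare $-\tfrac{k}{n}\log\tfrac{k}{n}$ with $-\delta\log\delta$ directly, since $p\mapsto -p\log p$ is not monotone on $[0,\tfrac12]$; going through the full entropy $H$ is the correct fix.
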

This estimate can be used to give a lower bound on the cardinality of a set of words where we can control the Birkhoff averages of $\ph$, and we can use this to estimate the pressure from below.

\begin{lemma} \label{lem:biggerthan}
Given $\epsilon$ as in Lemma \ref{lem:rare}, there exists $L'>0$ such that whenever $n$ is sufficiently large and $x_1\cdots x_n\in \LLL$ contains $m\geq \epsilon n$ occurrences of the symbol $1$, we have $\frac{1}{n}S_n \ph (x)  < P(\ph)- L'$.
\end{lemma}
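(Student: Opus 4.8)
The plan is to show that a word of positive $1$-density cannot carry almost-full pressure: I manufacture exponentially many admissible neighbours of it, at a controlled cost in Birkhoff weight, and feed them into the pressure.

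\emph{Step 1 (normalisation).} First I would apply edit approachability \ref{A.ASD}, in the explicit form proved above for $S$-gap shifts, to the prefix $x_1\cdots x_n$: editing only its leading and trailing $0$-runs yields $\tilde w\in\GGG$ with $\hd(x_1\cdots x_n,\tilde w)=o(n)$ and $n':=|\tilde w|=n(1+o(1))$. By Lemma~\ref{lem:stat-near} there is a sequence $\delta_n\to0$ (independent of $x$) with $\sup_{[\tilde w]}S_{n'}\ph\ge n'\big(\tfrac1nS_n\ph(x)-\delta_n\big)$. Next, since $x_1\cdots x_n$ has at least $\epsilon n$ symbols $1$ it has at least $\epsilon n-1$ interior $0$-blocks, and at most $O(n/M_0)$ of these exceed any fixed length $M_0$; choosing $M_0=M_0(\epsilon)$ large and pigeonholing over the finitely many admissible block-lengths in $S\cap[\min S,M_0]$ produces a single value $a^*\in S$ such that $\tilde w$ contains at least $c_1 n$ interior occurrences of the block $0^{a^*}1$, with $c_1=c_1(\epsilon)>0$.

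\emph{Step 2 (the window estimate --- the crux).} Fix once and for all $s'\in S\setminus\{a^*\}$ with $|s'-a^*|$ minimal. I claim there is a constant $C_1=C_1(\ph,S)$ so that whenever $w=A\,0^{a^*}1\,B$ with $A,B\in\GGG$, the word $w'=A\,0^{s'}1\,B\in\GGG$ satisfies $\big|\sup_{[w]}S_{|w|}\ph-\sup_{[w']}S_{|w'|}\ph\big|\le C_1$. Here the Bowen property of $\ph$ is essential: taking an optimal point for $S_{|w|}\ph$ and splitting the Birkhoff sum at the two endpoints of the block $0^{a^*}1$ (and likewise for $w'$), the contributions of the $A$-part and the $B$-part each move by at most the Bowen constant $V$ --- since $A,B\in\GGG$, the Bowen property controls these prefixes and suffixes --- while the short middle part is bounded crudely by $(\max(a^*,s')+1)\|\ph\|_\infty$ on each side. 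Iterating over the selected blocks: for $\vec b\in\{0,1\}^K$ (with $K\ge c_1 n$), let $v^{(\vec b)}\in\GGG$ be $\tilde w$ with the $i$-th selected block changed to $0^{s'}1$ exactly when $b_i=1$; then $|v^{(\vec b)}|=n'+(s'-a^*)\sum_i b_i$ and
\[
\sup_{[v^{(\vec b)}]}S_{|v^{(\vec b)}|}\ph\ \ge\ n'\big(\tfrac1nS_n\ph(x)-\delta_n\big)-C_1\sum_i b_i.
\]

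\emph{Step 3 (counting, and feeding the pressure).} For fixed $j$ the $\binom{K}{j}$ words $v^{(\vec b)}$ with $\sum_i b_i=j$ are pairwise distinct, all lie in $\GGG$, and all have the common length $N_j:=n'+(s'-a^*)j$. Using that $\GGG$ has $(0)$-specification, every concatenation of $m$ of them lies in $\GGG$ and has length $mN_j$, distinct tuples giving distinct words (cut into length-$N_j$ blocks); combining this with the Bowen property to pass from the cylinder of a concatenation to the product of its block-cylinders gives
\[
\Lambda_{mN_j}(\GGG,\ph)\ \ge\ \binom{K}{j}^{m}\exp\!\Big(m\big(n'(\tfrac1nS_n\ph(x)-\delta_n)-jC_1-V\big)\Big).
\]
As $\GGG\subset\LLL$, the definition of $P(\ph)$ as a $\limsup$ and letting $m\to\infty$ yield
\[
P(\ph)\ \ge\ \frac{\log\binom{K}{j}+n'\big(\tfrac1nS_n\ph(x)-\delta_n\big)-jC_1-V}{N_j}.
\]

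\emph{Step 4 (optimisation).} Finally I would take $j=\lfloor\alpha K\rfloor$ for a small constant $\alpha\in(0,\tfrac12)$, bound $\log\binom{K}{j}\ge\alpha\log(1/\alpha)\,K-O(\log K)$ via Lemma~\ref{eqn:stirling3}, and let $n\to\infty$, so that $n'=n(1+o(1))$, $N_j=n(1+o(1))$, $K/n'\ge c_1/2$ eventually, and $\delta_n,\tfrac{\log K}{n'}\to0$. Rearranging the last display then gives
\[
\tfrac1nS_n\ph(x)\ \le\ P(\ph)+\tfrac{c_1}{2}\,\alpha\big(C_1+|s'-a^*|\,|P(\ph)|-\log(1/\alpha)\big)+o(1).
\]
The parenthesised quantity depends only on $\ph$ and $S$ (not on $n$) and is negative once $\alpha$ is small; fixing such an $\alpha$ and then $n$ large gives $\tfrac1nS_n\ph(x)<P(\ph)-L'$ with, say, $L':=\tfrac{c_1}{4}\alpha\log(1/\alpha)>0$, which is the claim. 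The step I expect to be the main obstacle is the window estimate of Step~2: it is exactly what lets the $\Theta(n)$ edits needed to produce exponentially many admissible neighbours cost only $C_1$ per selected block in Birkhoff weight --- a loss of order $\alpha C_1$ per block that is outweighed, for $\alpha$ small, by the entropy gain of order $\alpha\log(1/\alpha)$ per block. The $S$-gap-specific difficulty that a gap cannot be freely reset to an arbitrary value of $S$ is circumvented by committing to one repeated value $a^*$ and one fixed alternative $s'$, which also keeps all the constructed words at a common length.
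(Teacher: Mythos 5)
Your argument is correct in substance, but it is a genuinely different construction from the one in the paper, and it is worth spelling out the contrast.

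The paper attacks the estimate directly on $x_1\cdots x_n$ without normalising into $\GGG$ and without pigeonholing for a common gap length. It fixes two distinct elements $n_1\neq n_2\in S$, selects $k\approx \delta m$ of the indices at which $x$ has a nonzero symbol, and \emph{inserts} a block $0^{n_1}1$ (or $0^{n_2}1$, to keep the map injective) after each selected index; the resulting words are longer than $n$, so it then truncates back to length $n$ and bounds the multiplicity of this truncation by $p^{k\ell}$ (with $\ell=\max\{n_1,n_2\}+1$). The truncation absorbs the length drift, so all the new words live in $\LLL_n$ at once, there is no pigeonholing, and one reads off $\log\Lambda_n(\LLL,\ph)\geq\log\binom{m}{k}-k\ell\log p + S_n\ph(x)-k\ell V'$ directly. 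Your version instead first edits $x_1\cdots x_n$ into $\tilde w\in\GGG$, pigeonholes over the bounded gap lengths to find a repeated value $a^*$ with a fixed alternative $s'$, and \emph{replaces} some of those gaps; you then cope with the resulting length drift by fixing $j$ (so that all $\binom Kj$ new words share a common length $N_j$) and gluing $m$ copies via $(0)$-specification. Both approaches buy the same entropy factor $\binom{\cdot}{\cdot}$ at a per-edit Birkhoff cost of order one, and both close the argument by a $\log(1/\delta)$ (resp.\ $\log(1/\alpha)$) versus $O(1)$ comparison. The paper's version is shorter because insertion-plus-truncation does away with normalisation, pigeonholing, and gluing in one stroke; yours avoids the preimage-counting lemma for the truncation at the price of the extra combinatorial preprocessing.

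Two small points you should tighten. First, in Step~4 the cost term $jC_1/n'=\alpha K C_1/n'$ needs an \emph{upper} bound on $K/n'$, not only the lower bound $K/n'\ge c_1/2$ that you use for the entropy gain; the clean fix is to commit in Step~1 to using exactly $K=\lfloor c_1 n\rfloor$ of the $a^*$-blocks (discarding the rest), so that both terms scale with the same $c_1$. Second, the gluing in Step~3 is not strictly needed: since $\log\Lambda_N(\LLL,\ph)$ is subadditive, $\frac1N\log\Lambda_N(\LLL,\ph)\to P(\ph)$, so the single-length estimate $\Lambda_{N_j}(\LLL,\ph)\ge\binom Kj e^{n'(\frac1nS_n\ph(x)-\delta_n)-jC_1}$ already yields $\frac1nS_n\ph(x)\le P(\ph)+o(1)-\frac1{N_j}\log\binom Kj+\frac{jC_1}{N_j}$ for $n$ large. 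Keeping the gluing does no harm, but it is an extra moving part.
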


We conclude from Lemma \ref{lem:rare} and Lemma \ref{lem:biggerthan} that
\[
\ulim_{n\to\infty}\sup_{x \in X} \frac 1n S_n\ph(x) \leq \max \{P(\ph) - L, P(\ph)- L'\} <P(\ph),
\]
and it is easy to verify \ref{III} from this together with  $h(\CCC^p \cup \CCC^s) = 0$.

\subsection{$\beta$-shifts}\label{sec:beta}

Every $\beta$-shift can be presented by a countable state directed labelled graph
with vertices $v_1,v_2,\cdots$. For every $i\ge 1$, we draw an edge from $v_i$ to $v_{i+1}$,
and label it with the value $\omega_i^{\beta}$. Next, whenever $\omega_i^{\beta}>0$,
for each integer from $0$ to $\omega_i^{\beta}-1$, we draw an edge from $v_i$ to
$v_1$ labelled by that value.

The $\beta$-shift can be characterised as the set of sequences given by the
labels of infinite paths through the directed graph which start at $v_1$.
For our set $\mathcal{G}$, we take the collection of words labelling a path that
begins and ends at the vertex $v_1$. Thus, $\GGG$ automatically satisfies the free concatenation property, and in particular, \ref{A.spec} holds.

Let $\varphi\colon\Sigma^{\beta}\to\mathbb{R}$ be a continuous function satisfying the
Bowen property. It is shown in \cite[\S3.1]{CT2} that
conditions \ref{I}--\ref{III} in Theorem~\ref{thm:CT} hold, so it only remains to check condition \ref{A.ASD}.

We now show that  $\mathcal{L}$ is Hamming approachable (and thus edit approachable) by $\GGG$ 
with mistake function $g\equiv 1$.
Let $z\in\mathcal{L}_n$. We set $j:=\max\{1\le i\le n :z_i\not =0\}$ and define a new word $w\in\LLL_n$ by
\[
w_i=
\begin{cases}
z_i & (1\le i\le n,i\not=j)); \\
z_j-1 & (i=j).
\end{cases}
\]
It is easy to see that $d_{\text{Ham}}(z, w) = \hat{d}(z,w)=1=g(n)$ and $w\in\mathcal{G}$, which implies \ref{A.ASD}.
It follows that $(\Sigma^{\beta},\sigma)$ satisfies the level-2 large deviations principle with reference measure $m_{\ph}$, and rate
function $q^{\ph}$ given by (\ref{eqn:rate}).

\section{Hamming approachability and Weak lower energy functionals} \label{sec:lowerenergy} For completeness, we describe the connection between our hypotheses and the hypotheses of Pfister and Sullivan \cite{PfS} for the large deviations lower bound. As remarked in \S \ref{sec:Hamming}, if $\LLL$ is Hamming approachable by $\GGG$, and $\GGG$ satisfies (S)-specification, then the symbolic space satisfies the almost specification property, and thus the approximate product property of \cite{PfS}.  So, in this setting, lower large deviations follow from \cite{PfS} by finding a lower weak energy function.

 \begin{proposition} \label{lowerenergy}
Let $(X,\sigma)$ be a shift on a finite alphabet, $m$ a Borel probability measure on $X$, and
$\varphi\colon X\to \mathbb{R}$ a continuous function. Let $\mathcal{L}$ be the language of $X$.
Suppose that $\mathcal{G}\subset\mathcal{L}$ is such that
\begin{enumerate}[label=($\diamondsuit$\arabic{*})]
\item\label{low-Gibbs} $m$ has the lower Gibbs property \eqref{eqn:lowerGibbs} for $\varphi$ with respect to $\mathcal{G}$;
\item\label{Hamming} $\LLL$ is Hamming approachable by $\GGG$;
\item the function $\psi:=P(\varphi)-\varphi$ is non-negative.
\end{enumerate}
Then 
$\psi$ is a weak lower energy function in the sense of Pfister and Sullivan \cite[Definition 3.3]{PfS}.
\begin{proof}
By \cite[Proposition 4.3]{PfS}, it is sufficient to show that
for any $\delta>0$, there exists $N$ so that $n\ge N$ implies that for
each $v\in \mathcal{L}_n$ there exists $w\in \mathcal{L}_n$ satisfying
$d_{\Ham}(v,w)\le \delta n$ and
\begin{equation}
\label{wlef}
\frac{1}{n}\log m([w])+\inf_{x\in [w]}S_n\psi(x)\ge -\delta.
\end{equation}
Fix any $\delta>0$ and let $K$ be the constant from the lower Gibbs property \eqref{eqn:lowerGibbs} given by \ref{low-Gibbs}.
Then there is $N_1$ so that $n\ge N_1$
implies $\frac{1}{n}\log K\ge -\delta$.
By \ref{Hamming}, there exists $N_{\delta}\geq N_1$ so that
$n\ge N_{\delta}$ implies that for each $v\in\mathcal{L}_n$, there exists $w\in\mathcal{G}_n$
such that $d_H(v,w)\le \delta n$. 

Fix any $n\ge N_\delta$, $v\in \mathcal{L}_n$.
By condition \ref{Hamming}, there exists 
$w\in \mathcal{G}_n$ such that
$d_n^H(v,w)\le\delta n$. Let $x\in [w]$.
Then it follows from \ref{low-Gibbs} that
\begin{align*}
\tfrac{1}{n}\log m([w])+\tfrac{1}{n}S_n\psi (x)
&=
\tfrac{1}{n}\log m([w])+P(\varphi)-\tfrac{1}{n}S_n\varphi (x)\\
&\ge
\tfrac{1}{n}\log K \ge
-\delta,
\end{align*}
which implies equation (\ref{wlef}).
\end{proof}
\end{proposition}
\begin{remark}

Given Theorem \ref{thm:CT}, \cite[Propositions 3.1 and 3.2]{PfS}, and \S \ref{sec:apps}, Proposition \ref{lowerenergy} gives another approach to the lower large deviations bound for $\beta$-shifts and $S$-gap shifts. In particular, Proposition \ref{lowerenergy} illuminates the mechanism that is implicitly used in the work of Pfister and Sullivan to obtain large deviations for the measure of maximal entropy for the $\beta$-transformation - the proof of existence of the weak lower energy function is presented in an ad hoc way in their study.
\end{remark}

%

\section{Proofs of Lemmas} \label{sec:lemmas}

\begin{proof}[Proof of Lemma \ref{lem:edit-balls}]

We obtain an upper bound on the number of words that can be obtained by making at most $m$ edits to $w$ as follows.
We introduce an additional symbol $e$ (for `edit'), 
 and construct a new word $w'$ of length $n+m$ which contains exactly $m$ of the symbols $e$, and so that $w_1=w'_1$. 
Note that $\smatrix{n+m \\ n}$ is an upper bound on the number of such words $w'$. 
Now obtain a new word $v \in \LLL$ from $w'$ by performing exactly one of the following actions at each symbol $e$, and then deleting the $e$.
\begin{enumerate}
\item Change the symbol immediately before $e$ to a different symbol.
\item Insert a symbol immediately before $e$.
\item Delete the symbol immediately before $e$.
\item Leave the symbol immediately before $e$ unchanged.
\end{enumerate}
Note that every word $v$ which satisfies $\hd(v,w) \leq m$ can be produced by this procedure. At each symbol $e$, there are a total of $2\#A + 2$ possible actions, so we see that 
\[
\#\{v \mid \hd(v, w) \} \leq (2\#A + 2)^m \smatrix{n+m \\ n}.
\]
From Stirling's formula there is a constant $C'$ such that 
\[
|\log n! - (n\log n - n)| \leq C'\log n
\]
for every $n\in\NN$, and so when $m\leq \delta n$ we have
\begin{align*}
\log \smatrix{n+m \\ n} &=
\log (n+m)! - \log n! - \log m! \\
&\leq \big((n+m)\log(n+m) - n\log n - m\log m\big) \\
&\qquad\qquad + C'(\log (m+n) + \log m + \log n) \\
&= \left(n \log\frac{n+m}{n} + m\log\frac{n+m}{m}\right) + 3C'\log(m+n) \\
&\leq n\left(\log (1+\delta) + \delta \log (1+\delta^{-1})\right) + 3C'\log ((1+\delta)n) \\
&=n \left( (1+\delta)\log(1+\delta) - \delta\log\delta \right) + 3C'\log((1+\delta)n).
\end{align*}
Using the inequalities $1+\delta\leq 2$ and $\log(1+\delta)\leq \delta$, we see that the left-hand side of \eqref{eqn:edit-ball} admits the bound
\begin{align*}
\#\{v\in\LLL&\mid \hd(v,w)\leq \delta n \} \\
&\leq (2\#A+2)^{\delta n}e^{n((1+\delta)\log(1+\delta) -\delta\log\delta)} e^{3C'\log((1+\delta)n)} \\
&\leq (2\#A+2)^{\delta n}e^{2n\delta} e^{n(-\delta\log\delta)} (1+\delta)^{3C'} n^{3C'},
\end{align*}
which completes the proof.
\end{proof}

\begin{proof}[Proof of Lemma \ref{lem:stat-near}] 
Let $\hat g(n) = g(n)+1$, so that $\hat g$ is also a mistake function.  Take $x,y$ and $m,n$ as in the hypothesis of the lemma, and let $k=\hat d(x_1\cdots x_n,\, y_1\cdots y_m) \leq g(n)$.  

Following the set-up of the proof of the previous lemma, we obtain a new word $w'$ by inserting the symbol $e$ into $k$ positions of 
$x_1\cdots x_n$ to mark where an insertion, deletion or substitution will take place to obtain $y_1\cdots y_m$. We write $w=w^1w^2 \cdots w^{k+1}$ so that the last symbol of each $w^i$ with $1 \leq i \leq k$ is $e$ (note that $w^{k+1}$ may be the empty word). Let $w^i_r$ be the word obtained by omitting the last two symbols from $w^i$, and form the word $w_r = w^1_r w^2_r \cdots w^{k+1}_r$ (where $r$ stands for `reduced', and if $|w^i| \leq 2$, then $w^i_r$ is the empty word). 
For $n \geq 0$, let
\begin{multline*}
V(n) = \sup \{ |S_{m'}\ph(x) - S_m\ph(y)| \mid x_1\cdots  x_n = y_1 \cdots y_n \\ \text{ and }  m,m' \in \{n, n+1, n+2 \}   \}.
\end{multline*}
Note that continuity of $\ph$ implies that $\frac 1n V(n) \to 0$.  In particular, for $z \geq 1$, we may write $\epsilon(z) = \sup_{m\geq z} \frac 1m V(m)$ and obtain $\epsilon(z)\to 0$. We will use this fact for ``long'' words, while for ``short'' words we will use the bound $V(n) \leq 2(n+2)\|\ph\| \leq 4(n+1)\|\ph\|$.

Both $x_1\cdots x_n$ and $y_1\cdots y_m$ can be obtained from $w_r$ by inserting at most two symbols at the end of each $w^i_r$, and so
$|S_n\ph(x) - S_m\ph(y)| \leq \sum_{j=1}^{k+1} V(n_j),$
where $n_j=|w^j_r|$. To bound this sum, we let $C_n = \sqrt{\frac{n}{\hat g(n)}}$ and break the sum into two parts, corresponding to $n_j < C_n$ and $n_j\geq C_n$. 
We have
\begin{multline}\label{eqn:Sndiff2}
|S_n\ph(x) - S_m\ph(y)| \leq \sum_{n_j < C_n} V(n_j)
+ \sum_{n_j \geq C_n} V(n_j) \\
\leq \sum_{n_j < C_n} 4(n_j+1)\|\ph\| + \sum_{n_j \geq C_n} n_j \epsilon(C_n) 
\leq 4C_n\|\ph\| \hat g(n) + n \epsilon(C_n),
\end{multline}
where the last inequality uses the fact that there are $k+1\leq \hat g(n)$ values of $j$ in total, and that $\sum n_j\leq n$. 

Now we can estimate the difference in Lemma \ref{lem:stat-near} as
\begin{align*}
\Big\lvert \frac 1n S_n\ph(x) &- \frac 1m S_m\ph(y) \Big\rvert
\leq \frac 1n \abs{S_n\ph(x) - S_m\ph(y)} + \abs{\frac 1n - \frac 1m}|S_m\ph(y)| \\
&\leq  4\|\ph\| C_n \frac{\hat g(n)}{n} + \epsilon(C_n) + \frac{|m-n|}{n} \frac 1m|S_m\ph(y)| \\
&\leq 4\|\ph\|\sqrt{\frac{\hat g(n)}{n}} + \epsilon(C_n) + \frac{\hat g(n)}{n} \|\ph\|.
\end{align*}
Because $\hat g$ is a mistake function, the first and third terms go to $0$ as $n\to\infty$, while $C_n\to\infty$ and so the second term goes to $0$ as well.  This completes the proof of Lemma \ref{lem:stat-near}.
\end{proof}

\begin{proof}[Proof of Proposition \ref{prop:full-pressure}]
Clearly $P(\GGG,\ph)\leq P(\ph)$, so it suffices to prove the other inequality. We compare $\Lambda_n(\LLL,\ph)$ and $\Lambda_n(\GGG,\ph)$ using Lemmas \ref{lem:edit-balls} and \ref{lem:stat-near}.  By edit approachability, for each $w\in \LLL_n$ there exists $v=v(w)\in \GGG$ such that $\hd(v,w)\leq g(|w|)$.  Lemma \ref{lem:edit-balls} tells us that given $v\in\GGG$, the number of words $w\in\LLL_n$ for which $v=v(w)$ is at most
\[
C n^C \left( e^{C\delta} e^{-\delta\log\delta}\right)^n,
\] 
where $\delta = g(n)/n$.  In particular, for all sufficiently large $n$ this expression is bounded above by $
e^{\delta_n' n}$,  where  $\delta_n' \to 0$.

It follows from Lemma \ref{lem:stat-near} that there is $\delta_n\to 0$ such that for every $v,w$ as above and any $x\in [v]$, $y\in [w]$, we have
\[
|S_n\ph(x) - S_{|w|}\ph(y)| \leq n\delta_n.
\]
Together the above estimates imply that 
\[
\Lambda_n(\LLL,\ph) \leq
\sum_{m=n-g(n)}^{n+g(n)} \sum_{w\in \GGG_m} e^{\delta_n' n} e^{n\delta_n + \sup_{y\in [w]} S_m\ph(y)},
\]
and so in particular there is $m\in [n-g(n),n+g(n)]$ such that
\[
\Lambda_m(\GGG,\ph) \geq \frac 1{2g(n)} e^{-(\delta_n' + \delta_n) n} \Lambda_n(\LLL,\ph).
\]
Since $g(n)$ is sublinear and $\delta_n,\delta_n'\to 0$, this implies the result.
\end{proof}

\begin{proof}[Proof of Lemma \ref{Gunderfactors}]
Items 1) and 3) can be obtained by making minor modifications to the proof of Proposition 2.2 in \cite[\S6.2]{CT}, so we omit these arguments and prove only item 2).

Let $\GGG\subset \LLL(\Sigma)$ and $\tilde{\GGG}\subset\LLL(X)$ be as in Lemma \ref{Gunderfactors} and assume that $\GGG$ satisfies \ref{A.ASD}.
Let $g\colon\NN\to\NN$ be a mistake function as in \ref{A.ASD} for $\GGG$.
Then we define a mistake function $\tilde{g}\colon \mathbb{N}\to \mathbb{N}$ by
$\tilde{g}(n)=(4r+3)g(n+2r)+4r$.
Take a $\tilde{z}\in\LLL(X)_n$. Since $\Psi$ is surjective,
there exists $z\in\LLL(\Sigma)_{n+2r}$ so that $\Psi(z)=\tilde{z}$.
Since $\GGG$ satisfies \ref{A.ASD}, we can find $w\in\GGG$ so that
$\hat{d}(z,w)\le g(n+2r)$ holds, where we recall that $r$ is the length of the block code. We set $\tilde{w}=\Psi(w)$.

Because $\hat{d}(z,w)\leq g(n+2r)$, there exist an integer $K\ge n-\big((2r+1)g(n+2r)+2r\big)$
and two increasing sequences $m_1<\cdots<m_K$, $n_1<\cdots<n_K$ so that
\[
z_{m_i-r}\cdots z_{m_i+r}=w_{n_i-r}\cdots w_{n_i+r}
\]
for each $1\le i\le K$.  Because $\Psi$ is a block code with length $r$, we have $\tilde{z}_{m_i}=\tilde{w}_{n_i}$ for $1\le i\le K$.
This implies that
$$\hat{d}(\tilde{z},\tilde{w})\le (n-K)+(|w|-K)\le 2(n-K)+||w|-n|\le \tilde{g}(n).$$
Thus $\GGG$ satisfies \ref{A.ASD}.
\end{proof}

\begin{proof}[Proof of Lemma \ref{lem:BR}]
We have 
\[
\Lambda_n(\CCC,\ph) = \sum_{w\in \CCC_n} e^{\sup_{x\in[w]} S_n\ph(x)} \leq  e^{n(\sup \ph)} \Lambda_n(\CCC,0),
\]
and so $P(\CCC,\ph) \leq h(\CCC) + \sup \ph$. By the variational principle and the assumption \eqref{BR}, we have
\[
 P(X, \ph) \geq h(X) + \inf \ph 
> h(\CCC) + \sup \ph \geq P(\CCC,\ph),
\]
 which proves the lemma.
\end{proof}

\begin{proof}[Proof of Lemma \ref{lem:zero}]
Let $V$ be such that $|S_n\ph(x) - S_n\ph(y)|\leq V$ whenever $x_1\cdots x_n = y_1\cdots y_n$, and in particular $S_n\ph(x) \geq n\ph(0) - V'$ for every $x\in [0^{n-1} 1]$, where $V' = V + \ph(0) - (\inf\ph)$.

Choose $k$ large (just how large will be determined later) and let $n_1,n_2,\dots,n_k\in S$ be distinct.  Let $\pi$ be any permutation of the integers $\{1,\dots,k\}$, and let $w_\pi$ be the word $0^{n_{\pi(1)}} 1 0^{n_{\pi(2)}}1 \cdots 0^{n_{\pi(k)}} 1$ of length $N = \sum_{j=1}^k (n_j + 1)$. 
 The estimates in the previous paragraph give
\[
S_N\ph(y) \geq N\ph(0) - kV'
\]
for every $y\in [w_\pi]$.  Now let $\vec\pi=(\pi_1,\dots,\pi_m)$ be any sequence of $m$ such permutations, and let $v_{\vec\pi} = w_{\pi_1} \cdots w_{\pi_m}$.  Choosing any $y_{\vec\pi}\in[v_{\vec\pi}]$, we obtain the estimate
\begin{equation}\label{eqn:mN}
\Lambda_{mN}(\LLL,\ph) \geq \sum_{\vec\pi} e^{S_{mN}\ph(y_{\vec\pi})}
\geq (k!)^m e^{mN\ph(0) - mkV'}.
\end{equation}
We have the general bound
\begin{equation}\label{eqn:stirling1}
\log(k!) = \sum_{j=1}^k \log j \geq \int_1^k \log t\,dt = k\log k - k - 1,
\end{equation}
which yields
\[
\log \Lambda_{mN}(\LLL,\ph)
\geq m(k\log k - k - 1) + mN\ph(0) - mkV',
\]
so that dividing by $mN$ and sending $m\to\infty$ we have
\[
P(\ph) \geq \ph(0) + \tfrac kN\left(\log k - 1 - \tfrac 1k - V'\right).
\]
Taking $k$ large gives the result.
\end{proof}
\begin{proof}[Proof of Lemma \ref{lem:rare}] By Lemma \ref{lem:zero}, there exists $\epsilon>0$ such that $\ph(0) + 2\epsilon V' < P(\ph)$, where $V'$ is the constant from the proof of the previous lemma.  Note that if $x_1\cdots x_n$ contains fewer than $\epsilon n$ occurrences of the symbol $1$, then
$
S_n\ph(x) \leq n\ph(0) + \epsilon n V',
$
and in particular
\begin{equation}\label{eqn:rare}
\tfrac 1n S_n\ph(x) \leq \ph(0) + \epsilon V' < P(\ph) - \epsilon V'.
\end{equation}
Settig $L=\epsilon V'$ gives the result.
\end{proof}

\begin{proof}[Proof of Lemma \ref{eqn:stirling3}]

We use the upper bound
\[
\begin{aligned}
\log(k!) &= \sum_{j=1}^k \log j \leq \int_1^{k+1}\log t\,dt = (k+1)\log(k+1) - k \\
&= (k\log k - k) + k\log\left(1 + \tfrac 1k\right) + \log(k+1) \\
&\leq (k\log k - k) + (1+ \log(k+1)),
\end{aligned}
\]
which together with \eqref{eqn:stirling1} gives, for all large $n$,
\begin{align*}
\log \begin{pmatrix} n \\ k \end{pmatrix} &=
\log(n!) - \log(k!) - \log((n-k)!) \\
&\geq (n\log n - n - 1) - (k\log k - k) - (1 + \log(k+1)) \\
&\quad - ((n-k)\log(n-k) - (n-k)) - (1 + \log (n-k+1)) \\
&\geq n h\left( \tfrac kn \right) - 2\log n,
\end{align*}
where $h(\delta) = -\delta\log \delta - (1-\delta) \log(1-\delta)$.
\end{proof}

\begin{proof}[Proof of Lemma \ref{lem:biggerthan}]
Now assume that $x_1\cdots x_n$ contains $m\geq \epsilon n$ occurrences of the symbol $1$.  By considering a smaller collection of indices where the entry is $1$ if necessary, we may assume that $m\leq 2\epsilon n$.  

Given $\delta>0$ small (just how small will be determined later), let $\delta m < k < 2\delta m$.  Let $R$ be the set of indices in which $x_1\cdots x_n$ has a nonzero symbol, and let $\mathcal{Z}$ be the collection of subsets of $R$ with exactly $k$ elements.

We define a map $\phi\colon \mathcal{Z}\to X$ as follows.  Fix $n_1\neq n_2\in S$.  Given $Z\in \mathcal{Z}$, at each index $k\in Z$ insert the word $0^{n_1}1$ into $x$, unless $x_{k+1} \cdots x_{k+{n_1}+1} = 0^{n_1} 1$, in which case insert the word $0^{n_2} 1$.  This is allowed by the definition of the $S$-gap shift, and we note that $\phi$ is 1-1.

Let $\ell=\max\{n_1,n_2\} + 1$, and observe that $\phi(Z)$ is obtained from $x$ by inserting at most $k\ell$ symbols, so that if $p$ is the size of the alphabet, then the map $\Phi\colon \mathcal{Z}\to \LLL_n$ obtained by truncating $\phi(Z)$ to the first $n$ symbols has the property that $\#\Phi^{-1}(w) \leq p^{k\ell}$ for each $w\in \LLL_n$.  

We conclude that the map $\Phi$ yields at least $\smatrix{ m \\ k} p^{-k\ell}$ words $w$ in $\LLL_n$ with the property that
\[
S_n\ph(y) \geq S_n\ph(x) - k\ell V' \geq S_n\ph(x) - 4\epsilon\delta n V'
\]
for every $y\in [w]$.  In particular, together with Lemma \ref{eqn:stirling3} and the conditions on $m$ and $k$, this gives the estimate
\begin{multline*}
\log \Lambda_n(\LLL,\ph) \geq -m\delta \log\delta -2\log m -k\ell\log p+ S_n\ph(x) - 4\epsilon\delta n V' \\
\geq (\epsilon n)(-\delta\log\delta) - 4\log(\epsilon n) - 4\epsilon\delta \ell\log p + S_n\ph(x) - 4\epsilon \delta nV'.
\end{multline*}
Dividing by $n$ gives
\[
\frac 1n \log \Lambda_n(\Lambda,\ph)
\geq \frac 1n S_n\ph(x) + \epsilon\delta(-\log \delta - 4\ell\log p - 4V') - 4\frac{\log(\epsilon n)}{n},
\]
which yields the desired result when $\delta$ is chosen sufficiently small and $n$ is chosen sufficiently large.
\end{proof}


\end{document}